\newtheorem{proposition}{Proposition}[section]
\newtheorem{lemma}[proposition]{Lemma}
\newtheorem{theorem}[proposition]{Theorem}
\newtheorem{corollary}[proposition]{Corollary}
\theoremstyle{definition}
\newtheorem{remark}[proposition]{Remark}
\newtheorem{example}[proposition]{Example}
\newtheorem{definition}[proposition]{Definition}
\tikzstyle{place}=[draw,circle,minimum size=1mm,inner sep=1pt,outer sep=-1.1pt,fill=black]
\tikzstyle{places}=[draw,rectangle,minimum size=8pt,inner sep=0pt]
\tikzstyle{placesf}=[draw,rectangle,minimum size=5pt,inner sep=0pt]
\tikzstyle{placec}=[draw,circle,minimum size=8pt,inner sep=0pt]
\tikzstyle{placecf}=[draw,circle, minimum size=5pt,inner sep=0pt]
\newcommand{\rindj}{\mathrm{Ind}_r(G_1*G_2)}
\newcommand{\K}{\mathbb K}
\newcommand{\N}{\mathbb N}
\newcommand{\intersectionw}{(\Gamma_1\cap\Gamma_2)[W]}
\def\H{\mathcal{H}}
\def\V{\mathcal{V}}
\def\E{\mathcal{E}}
\def\C{\mathrm{Con}_r(G)}
\begin{document}


\title[Hyperedge ideal of join of graphs]{Graded Betti numbers of a hyperedge ideal associated to join of graphs}
\thanks{Last updated: \today}
\thanks{Corresponding author: Amit Roy}

\author{Sourav Kanti Patra}
\address{Department of Mathematics, Kishori Sinha Mahila College (Constituent unit of Magadh University, Bodh Gaya), Aurangabad, Bihar-824101.}
\email{souravkantipatra@gmail.com}

\author{Amit Roy}
\address{Chennai Mathematical Institute, H1 Sipcot IT Park, Siruseri, Kelambakkam, Chennai 603103, India}
\email{amitiisermohali493@gmail.com}

\keywords{Hyperedge ideal, Stanley-Reisner ideal, graded Betti numbers, join of graphs}
\subjclass[2020]{13F55, 05E45}

\begin{abstract}
Let $G$ be a finite simple graph on the vertex set $V(G)$ and let $r$ be a positive integer. We consider the hypergraph $\mathrm{Con}_r(G)$ whose vertices are the vertices of $G$ and the (hyper)edges are all $A\subseteq V(G)$ such that $|A|=r+1$ and the induced subgraph $G[A]$ is connected. The (hyper)edge ideal $I_r(G)$ of $\C$ is also the Stanley-Reisner ideal of a generalization of the independence complex of $G$, called the $r$-independence complex $\mathrm{Ind}_r(G)$. In this article, we make extensive use of the Mayer-Vietoris sequence to find the graded Betti numbers of $I_r(G_1*G_2)$ in terms of the graded Betti numbers of $I_r(G_1)$ and $I_r(G_2)$, where $G_1*G_2$ is the join of $G_1$ and $G_2$. Moreover, we find formulas for the graded Betti numbers and the Castelnuovo-Mumford regularity of $I_r(G)$, when $G$ is a complete graph, complete multipartite graph, cycle graph, and the wheel graph.
\end{abstract}

\maketitle


\section{Introduction}

A hypergraph $\H=(\V,\E)$ on the vertex set $\V$ is a collection of subsets $\E$ of $\V$, satisfying the following two conditions: (i) $|E|\ge 2$ for all $E\in \E$, and (ii) $E_i\nsubseteq E_j$ for all $E_i,E_j\in\E$. The set $\E$ is called the {\it hyperedge set} of $\H$. We sometimes write $\V(\H)$ and $\E(\H)$ in place of $\V$ and $\E$, respectively, in order to emphasize the hypergraph $\H$. Given a hypergraph $\H$ on the vertex set $\V=\{x_1,\ldots,x_n\}$ one can associate it with a square-free monomial ideal in the polynomial ring $R=\K[x_1,\ldots,x_n]$, where $\K$ is a field. The association is given by defining the monomial ideal $I(\H)=\langle \mathbf{x}_A:=\prod_{x_i\in E}x_i\mid E\in \E\rangle$ in $R$. The ideal $I(\H)$ is called the {\it hyperedge ideal}, or simply the edge ideal of $\H$, and much of the algebraic and homological properties of the ideal are governed by the combinatorics of $\H$. 

An active area of research in combinatorial commutative algebra is to study the homological properties of $I(\H)$ in terms of the combinatorial properties of $\H$. For example, in \cite{HVT} H\'a and Van Tuyl explicitly computed the Castelnuovo-Mumford regularity of $I(\H)$ when $\H$ is a triangulated hypergraph. In \cite{Emtander} Emtander defined a certain types of complete hypergraph and complete multipartite hypergraph and computed the Betti numbers and the regularity of the corresponding edge ideals. Emtander et al. in \cite{EFMM} have determined the Poincar\'e series of some hypergraph algebras generalizing lines, cycles, and stars.

One of the motivations for studying edge ideals of hypergraphs comes from its connection with a celebrated theorem by Fr\"oberg in the context of edge ideals of graphs. Let $G$ be a finite simple graph with vertex set $V(G)$ and edge set $E(G)$. If $V(G)=\{x_1,\ldots,x_n\}$ then we associate $G$ with a quadratic square-free monomial ideal $I(G)=\langle 
x_ix_j\mid \{x_i,x_j\}\in E(G) \rangle$ in the polynomial ring $\mathbb K[x_1,\ldots,x_n]$. The ideal $I(G)$ is called the {\it edge ideal} of $G$. In 1988 Fr\"oberg \cite{RF} showed that a square-free monomial ideal of degree $2$ has linear resolution if and only if it is the edge ideal of the complement of a chordal graph; thus providing a combinatorial characterization of such ideals. Since then researchers have been trying to find an analogue of this result for square-free monomial ideals in higher degrees. However, for such ideals generated in degree $3$ the resolution of the ideal depends on the characteristic of the base field (see \cite[Section 4]{MK}). Because of this one can ask for a combinatorial characterisation for those square-free monomial ideals in higher degrees which have linear resolution over all field $\K$. One way to answer this question is by defining a hypergraph analog of chordal graphs and then considering their hyperedge ideals. In this direction, Van Tuyl and Villarreal \cite{VTV}, Woodroofe \cite{Woodroofe}, Emtander \cite{Emtander1}, Bigdeli, Yazdan Pour and Zaare-Nahandi \cite{BiYPZN} independently gave different notions of chordal hypergraphs and showed that the hyperedge ideals of complement of such hypergraps have linear free resolution over any field. However, there are examples of square-free monomial ideals having linear free resolution over any field $\K$ but not being a hyperedge ideal of the complement of a chordal hypergraph in any of the above senses (see \cite[Section 4]{BiYPZ}). Thus the quest for finding such a characterisation is still wide open. 

In this paper, we study the edge ideal $I_r(G)$ of a family of hypergraphs $\C$ induced from a given graph $G$. Let $r$ be a positive integer and let $G$ be a finite simple graph on the vertex set $V(G)$ and edge set $E(G)$. Define the hypergraph $\mathrm{Con}_r(G)$ as follows.
\begin{align*}
    \V(\mathrm{Con}_r(G))&=V(G),\\
    \E(\mathrm{Con}_r(G))&=\{A\subseteq V(G)\mid |A|=r+1\text{ and }G[A]\text{ is connected }\}.
\end{align*}
Note that $\mathrm{Con}_1(G)=G$ and the ideal $I_1(G)$ is the well-known edge ideal of $G$. If $V(G)=\{x_1,\ldots,x_n\}$ then we denote the polynomial ring $\mathbb K[x_1,\ldots,x_n]$ as $R_G$. In \cite{ADGRS} it was shown that if $G$ is a tree then $\C$ is a chordal hypergraph in the sense of Woodroofe \cite{Woodroofe}. Thus the edge ideal of the complement of $\C$ has linear resolution when $G$ is a tree. The independence complex of $\C$ is called the $r$-independence complex of $G$ and is denoted by $\mathrm{Ind}_r(G)$. The complex $\mathrm{Ind}_r(G)$ can be realized as a generalization of the independence complex $\mathrm{Ind}(G)$ of $G$. Analogous to $\mathrm{Ind}(G)$, the independence complex of $\C$ also has some interesting topological and combinatorial properties. For example, in \cite{PS}, Paolini and Salvetti established a relationship between certain twisted (co)homology of the classical Braid groups and the (co)homology of the independence complexes of $\C$ of certain graphs $G$. In \cite{DS} the authors have shown that the independence complex of $\C$ is homotopy equivalent to a wedge of spheres when $G$ is a cycle graph or a perfect m-ary tree. Extending a result by Meshulam \cite{Meshulam}, Deshpande, Shukla, and Singh showed in \cite{DSS} that the (homological) connectivity of the independence complex of $\C$ gives an upper bound for the distance $r$-domination number of the underlying graph $G$. In the same paper, the authors also proved that the independence complex of $\C$, when $G$ is a chordal graph, is homotopy equivalent to a wedge of spheres for every $r\ge 1$. 
From the perspective of the Cohen-Macaulay conditions it was shown in \cite{ADGRS} that the independence complex of $\C$ is shellable when $G$ is a tree.  Moreover, it was proved using commutative algebra techniques that these complexes are vertex decomposable, the strongest among the Cohen-Macaulay conditions when the underlying graph is a caterpillar graph.

The edge ideal $I_r(G)$ of $\C$ also has some interesting algebraic and homological properties. For example, it was shown in \cite{DRST} that if the complement of $G$ is chordal then $I_r(G)$ has linear resolution; thus providing a partial generalization of Fr\"oberg's theorem. In fact, the authors have proved that the ideal $I_r(G)$ is vertex splittable, a stronger property than having linear resolution. Using the Betti splitting of vertex splittable ideal they also provided explicit formulas for $\N$-graded Betti numbers of $I_r(G)$ for some well-known families of graphs.

The graded Betti numbers and the Castelnuovo-Mumford regularity are two important numerical invariants of the minimal free resolution of a module. One current topic of research in this area is to determine the Betti numbers and the regularity of the ideal $I(\H)$ for various classes of $\H$ (see, for example, \cite{Emtander,LM, MKM}). In this paper, we focus on the Betti numbers of the hyperedge ideal $I_r(G)$, where the graph $G$ can be written as a join of two graphs $G_1$ and $G_2$. Note that if $V(G_1)$ and $V(G_2)$ are two disjoint sets then the join $G_1*G_2$ of $G_1$ and $G_2$ is defined as follows:
\begin{align*}
    V(G_1*G_2)&=V(G_1)\sqcup V(G_2),\\
    E(G_1*G_2)&=E(G_1)\cup E(G_2)\cup\{\{x,y\}\mid x\in V(G_1)\text{ and }y\in V(G_2)\}.
\end{align*}
Let $G=G_1*G_2$. For $r=1$, Mousivand \cite{AM} expressed the graded Betti numbers of $R_{G}/I_r(G)$ in terms of the graded Betti numbers of $R_{G_1}/I_r(G_1)$ and $R_{G_2}/I(G_2)$. In this article, we find such formulas for a general $r$. Indeed, we prove the following theorem which relates the Betti numbers of $R_{G_1*G_2}/I_r(G_1*G_2)$ with the Betti numbers of $R_{G_1}/I_r(G_1)$ and $R_{G_2}/I_r(G_2)$.

\begin{theorem}[\Cref{main theorem}]
    Let $G_1$ and $G_2$ be two graphs on the vertex sets $\{x_1,\ldots,x_n\}$ and $\{y_1,\ldots,y_m\}$, respectively. Let $r$ be a positive integer. Then the $\mathbb N$-graded Betti numbers of $R_{G_1*G_2}/I_r(G_1*G_2)$ can be expressed as follows. {\footnotesize
    \begin{align*}
    &\beta_{i,i+d}(R_{G_1*G_2}/I_r(G_1*G_2))\\
    &=\begin{cases}
        \sum_{j=0}^{i+d-1}\left\{{m\choose j}\beta_{i-j,i-j+d}(R_{G_1}/I_r(G_1))+{n\choose j}\beta_{i-j,i-j+d}(R_{G_2}/I_r(G_2)) \right\}+\\
\hspace{11em}   \sum_{j=1}^{i+d-1} \left[{i+d-1\choose d}-{i+d-1-j\choose d}-{j-1\choose d}\right]{m\choose j}{n\choose i+d-j}\hspace{2em}\text{ for }d=r,
 \\
    \sum_{j=0}^{i+d-1}\left\{{m\choose j}\beta_{i-j,i-j+d}(R_{G_1}/I_r(G_1))+{n\choose j}\beta_{i-j,i-j+d}(R_{G_2}/I_r(G_2)) \right\}\hspace{4.7em}\text{ for }d\ge r+1.
    \end{cases}
    \end{align*}}
    \label{intro main theorem}
\end{theorem}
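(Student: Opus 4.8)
The plan is to pass to simplicial homology via Hochster's formula. Since $R_{G_1*G_2}/I_r(G_1*G_2)$ is the Stanley--Reisner ring of $\mathrm{Ind}_r(G_1*G_2)$, Hochster's formula gives $\beta_{i,i+d}(R_{G_1*G_2}/I_r(G_1*G_2))=\sum_{W}\dim_{\K}\widetilde H_{d-1}(\mathrm{Ind}_r(G_1*G_2)[W];\K)$, the sum running over $W\subseteq V(G_1)\sqcup V(G_2)$ with $|W|=i+d$, and similarly for $G_1$ and $G_2$. Writing $W_1=W\cap V(G_1)$ and $W_2=W\cap V(G_2)$, the first step is the combinatorial observation that any $(r+1)$-subset of $W$ meeting both $W_1$ and $W_2$ induces a connected subgraph of $G_1*G_2$ and is therefore a hyperedge of $\mathrm{Con}_r(G_1*G_2)$; hence
$$\mathrm{Ind}_r(G_1*G_2)[W]=\mathrm{Ind}_r(G_1)[W_1]\cup\mathrm{Ind}_r(G_2)[W_2]\cup\Sigma_W,$$
where $\Sigma_W$ is the $(r-1)$-skeleton of the full simplex on $W$. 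In particular this complex always contains the full $(r-1)$-skeleton on $W$, so $\widetilde H_{d-1}$ vanishes whenever $d<r$, which is why the formula is only stated for $d\ge r$.

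To compute the relevant homology I would apply the Mayer--Vietoris sequence to $A=\mathrm{Ind}_r(G_1)[W_1]\cup\Sigma_W$ and $B=\mathrm{Ind}_r(G_2)[W_2]\cup\Sigma_W$, for which $A\cup B=\mathrm{Ind}_r(G_1*G_2)[W]$ and $A\cap B=\Sigma_W$. The reduced homology of a skeleton of a simplex is classical: $\widetilde H_\ell(\Sigma_W;\K)$ equals $\K^{\binom{|W|-1}{r}}$ for $\ell=r-1$ and $0$ otherwise, and likewise for $\Sigma_{W_1},\Sigma_{W_2}$. A first application of Mayer--Vietoris to $A$ (using $\mathrm{Ind}_r(G_1)[W_1]\cap\Sigma_W=\Sigma_{W_1}$) shows $\widetilde H_\ell(A;\K)\cong\widetilde H_\ell(\mathrm{Ind}_r(G_1)[W_1];\K)$ for all $\ell\ge r$; the only thing to check is that the inclusion $\Sigma_{W_1}\hookrightarrow\Sigma_W$ is injective on $\widetilde H_{r-1}$, which is automatic since neither skeleton has $r$-cells, so $\widetilde H_{r-1}$ is just the space of $(r-1)$-cycles. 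The symmetric statement holds for $B$.

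Feeding this into the Mayer--Vietoris sequence for $A\cup B$: when $d\ge r+2$ both $\widetilde H_{d-1}(\Sigma_W)$ and $\widetilde H_{d-2}(\Sigma_W)$ vanish, so $\widetilde H_{d-1}(\mathrm{Ind}_r(G_1*G_2)[W])\cong\widetilde H_{d-1}(\mathrm{Ind}_r(G_1)[W_1])\oplus\widetilde H_{d-1}(\mathrm{Ind}_r(G_2)[W_2])$; the remaining boundary case $d=r+1$ follows once one checks that $\widetilde H_{r-1}(\Sigma_W)\to\widetilde H_{r-1}(A)\oplus\widetilde H_{r-1}(B)$ is injective, which amounts to the fact that the kernels of $\widetilde H_{r-1}(\Sigma_W)\to\widetilde H_{r-1}(A)$ and $\widetilde H_{r-1}(\Sigma_W)\to\widetilde H_{r-1}(B)$ are the spaces of $(r-1)$-boundaries coming from the $r$-faces of $\mathrm{Ind}_r(G_1)[W_1]$ and of $\mathrm{Ind}_r(G_2)[W_2]$ respectively, which are supported on the disjoint sets $W_1$ and $W_2$ and so meet only in $0$. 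Substituting the resulting isomorphism into Hochster's formula and sorting the double sum according to $|W_2|=j$ (respectively $|W_1|=j$) yields exactly the stated formula for $d\ge r+1$.

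I expect the genuine obstacle to be the case $d=r$, the single degree in which $\Sigma_W$ carries homology and the direct-sum splitting breaks. Here I would argue directly rather than through Mayer--Vietoris: $\mathrm{Ind}_r(G_1*G_2)[W]$ has the full $(r-1)$-skeleton on $W$, and its $r$-faces are exactly the $(r+1)$-subsets of $W_1$ inducing a disconnected subgraph of $G_1$ together with the $(r+1)$-subsets of $W_2$ inducing a disconnected subgraph of $G_2$ (no mixed $r$-face occurs, as every mixed $(r+1)$-set is a hyperedge), so $\widetilde H_{r-1}=Z_{r-1}(\Sigma_W)/(B^{(1)}\oplus B^{(2)})$, where $B^{(i)}$ is the boundary space generated by the $r$-faces supported in $W_i$, the sum being direct because $W_1\cap W_2=\emptyset$. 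Since $B^{(i)}$ is precisely the space of $(r-1)$-boundaries of $\mathrm{Ind}_r(G_i)[W_i]$, one obtains $\dim_{\K}\widetilde H_{r-1}(\mathrm{Ind}_r(G_1*G_2)[W])=h_1+h_2+\binom{|W|-1}{r}-\binom{|W_1|-1}{r}-\binom{|W_2|-1}{r}$, where $h_i=\dim_{\K}\widetilde H_{r-1}(\mathrm{Ind}_r(G_i)[W_i];\K)$, using $\dim_{\K}Z_{r-1}(\Sigma_W)=\binom{|W|-1}{r}$ and $\dim_{\K}B^{(i)}=\binom{|W_i|-1}{r}-h_i$. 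Summing over $W$ with $|W|=i+r$, the $h_1+h_2$ contribution reproduces the first sum of the theorem (with $d=r$), while grouping $W$ by $(|W_1|,|W_2|)=(i+r-j,j)$ and weighting by $\binom{n}{i+r-j}\binom{m}{j}$ turns the three binomial terms into the second sum; the terms $j=0$ and $j=i+r$ contribute $0$, so the range $1\le j\le i+r-1$ is as stated. What remains is routine manipulation of binomial coefficients.
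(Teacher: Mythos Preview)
Your proposal is correct and follows the same overall architecture as the paper (Hochster's formula reduces everything to the reduced homology of induced subcomplexes, and Mayer--Vietoris handles the splitting), but the details are organised differently in two respects worth noting.

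First, your Mayer--Vietoris cover is not the paper's. The paper takes $\Gamma_1[W]=\mathrm{Ind}_r(G_1)[W_1]\cup(\Gamma_1\cap\Gamma_2)[W]$, where $(\Gamma_1\cap\Gamma_2)[W]$ is the complex generated by the genuinely \emph{mixed} faces $S\sqcup T$ with $|S|,|T|\ge 1$ and $|S|+|T|\le r$; this is strictly smaller than your $\Sigma_W$ (it misses the pure size-$r$ subsets of $W_1$ or $W_2$). As a result the paper needs a separate lemma (their Lemma~3.3) establishing that $(\Gamma_1\cap\Gamma_2)[W]$ is Cohen--Macaulay of dimension $r-1$ and then reading off $\widetilde H_{r-1}$ from the Euler characteristic. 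Your choice $A\cap B=\Sigma_W$ sidesteps this, since the homology of a skeleton of a simplex is immediate; the cost is that you must check injectivity of $\widetilde H_{r-1}(\Sigma_{W_1})\to\widetilde H_{r-1}(\Sigma_W)$ and of $\widetilde H_{r-1}(\Sigma_W)\to\widetilde H_{r-1}(A)\oplus\widetilde H_{r-1}(B)$, which you do correctly via the disjoint-support observation.

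Second, and more substantively, your treatment of the case $d=r$ is more elementary than the paper's. The paper stays inside Mayer--Vietoris: it proves via an explicit chain-level diagram chase that the connecting map $\partial^r$ vanishes (their Lemma~3.6), then runs a second Mayer--Vietoris on $\Gamma_1[W]=\Gamma_{11}[W_1]\cup\Gamma_{12}[W]$ and shows that $\partial^{r-1}$ is surjective, finally assembling the pieces through three auxiliary lemmas. Your argument bypasses all of that by observing directly that $\mathrm{Ind}_r(G_1*G_2)[W]$ has full $(r-1)$-skeleton and that its $r$-cells split as a disjoint union over $W_1$ and $W_2$, so $\widetilde H_{r-1}=Z_{r-1}(\Sigma_W)/(B^{(1)}\oplus B^{(2)})$ and the dimension formula $h_1+h_2+\binom{|W|-1}{r}-\binom{|W_1|-1}{r}-\binom{|W_2|-1}{r}$ drops out from rank--nullity. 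This is exactly the content of the paper's Lemma~3.6, obtained with considerably less machinery. Both routes yield the same Hochster sum, and the bookkeeping over $j=|W_2|$ is then identical.
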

Using the above theorem we can prove the following result on the (Castelnuovo-Mumford) regularity of $R_{G_1*G_2}/I_r(G_1*G_2)$.

\begin{proposition}[\Cref{main corollary}]
     Let $G_1$ and $G_2$ be two graphs on disjoint vertex sets and let $r$ be a positive integer. Then 
    \[
    \mathrm{reg}(R_{G_1*G_2}/I_r(G_1*G_2))=\max\left\{\mathrm{reg}(R_{G_1}/I_r(G_1)),\mathrm{reg}(R_{G_2}/I_r(G_2))\right\}.
    \]
\end{proposition}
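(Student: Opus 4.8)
The plan is to read both inequalities straight off the Betti-number formula of \Cref{intro main theorem}, using the description $\mathrm{reg}(M)=\max\{d\mid \beta_{i,i+d}(M)\neq 0\text{ for some }i\ge 0\}$. Write $\rho_k=\mathrm{reg}(R_{G_k}/I_r(G_k))$ for $k=1,2$ and $\rho=\max\{\rho_1,\rho_2\}$. Since a nonzero $I_r(G)$ is generated in degree $r+1$, each of $\rho_1,\rho_2$ and $\mathrm{reg}(R_{G_1*G_2}/I_r(G_1*G_2))$ lies in $\{0\}\cup\{r,r+1,r+2,\dots\}$; in particular the two ranges $d=r$ and $d\ge r+1$ appearing in \Cref{intro main theorem} are exactly the ones relevant to regularity, and I may assume $\rho\ge r$, the complementary case (in which $I_r(G_1)=I_r(G_2)=0$) being degenerate and handled separately.

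For the bound $\mathrm{reg}(R_{G_1*G_2}/I_r(G_1*G_2))\le\rho$, I fix $d>\rho$. Then $d\ge r+1$, so $\beta_{i,i+d}(R_{G_1*G_2}/I_r(G_1*G_2))$ is computed by the second branch of the formula, which presents it as a sum over $j\ge 0$ of the nonnegative numbers $\binom{m}{j}\beta_{i-j,\,i-j+d}(R_{G_1}/I_r(G_1))$ and $\binom{n}{j}\beta_{i-j,\,i-j+d}(R_{G_2}/I_r(G_2))$. Each of these vanishes: when $i-j<0$ by the usual convention, and when $i-j\ge 0$ because $d>\rho\ge\rho_k$ lies above the regularity of $R_{G_k}/I_r(G_k)$. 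Hence $\beta_{i,i+d}(R_{G_1*G_2}/I_r(G_1*G_2))=0$ for every $i$, giving the upper bound.

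For the reverse bound $\mathrm{reg}(R_{G_1*G_2}/I_r(G_1*G_2))\ge\rho$, I assume without loss of generality $\rho=\rho_1$ and choose $i_0$ with $\beta_{i_0,\,i_0+\rho}(R_{G_1}/I_r(G_1))\neq 0$. If $\rho\ge r+1$, the second branch of the formula applies at $(i_0,\,i_0+\rho)$; isolating the $j=0$ summand and using that the remaining summands are nonnegative gives
\[
\beta_{i_0,\,i_0+\rho}(R_{G_1*G_2}/I_r(G_1*G_2))\ \ge\ \beta_{i_0,\,i_0+\rho}(R_{G_1}/I_r(G_1))\ >\ 0,
\]
so $\mathrm{reg}(R_{G_1*G_2}/I_r(G_1*G_2))\ge\rho$. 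If $\rho=r$, then $I_r(G_1)\neq 0$; since the join introduces no edges inside $V(G_1)$, every hyperedge of $\mathrm{Con}_r(G_1)$ remains a hyperedge of $\mathrm{Con}_r(G_1*G_2)$, so $I_r(G_1)\subseteq I_r(G_1*G_2)$ and $\beta_{1,1+r}(R_{G_1*G_2}/I_r(G_1*G_2))$ — the minimal number of generators — is positive, whence $\mathrm{reg}(R_{G_1*G_2}/I_r(G_1*G_2))\ge r=\rho$. The two bounds together give the proposition.

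I do not expect a deep obstacle here: the content is carried entirely by \Cref{intro main theorem}, and the argument is bookkeeping on that formula. The points needing genuine care are: first, pinning down the relevant degree range, i.e.\ recording that $\mathrm{reg}(R/I)\ge r$ when $I$ is generated in degree $r+1$, so one never reasons below degree $r$; second, the extra term $\sum_{j\ge 1}\big[\binom{i+d-1}{d}-\binom{i+d-1-j}{d}-\binom{j-1}{d}\big]\binom{m}{j}\binom{n}{i+d-j}$ that is present only at $d=r$, whose nonnegativity — should one prefer a uniform treatment of the lower bound via the formula rather than via the generator count — follows from the elementary inequality $\binom{a+b+1}{r}\ge\binom{a}{r}+\binom{b}{r}$, proved by counting the $r$-subsets of an $(a{+}b{+}1)$-element set that either lie in a fixed $a$-subset or avoid a fixed disjoint $(a{+}1)$-subset; and third, the routine disposal of the degenerate configurations where $I_r(G_1)$, $I_r(G_2)$, or $I_r(G_1*G_2)$ is the zero ideal.
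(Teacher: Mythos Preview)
Your argument is essentially the paper's: both read the two inequalities directly off the Betti-number formula of \Cref{intro main theorem}, using that every summand there is nonnegative (the paper records the needed inequality $\binom{i+d-1}{d}\ge\binom{i+d-1-j}{d}+\binom{j-1}{d}$ by induction on $d$; you give an equivalent counting proof). Your case split $\rho\ge r+1$ versus $\rho=r$, and the alternative generator-count argument for the latter, are mild elaborations of the same idea and are correct.

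One caveat about the last item on your checklist: the configuration $I_r(G_1)=I_r(G_2)=0$ is \emph{not} routine---the stated identity is actually false there. Take $G_1$ a single vertex and $G_2$ an edgeless graph on $r$ vertices; then $I_r(G_1)=I_r(G_2)=0$, but $G_1*G_2$ has a connected $(r{+}1)$-subset, so $I_r(G_1*G_2)\neq 0$ and $\mathrm{reg}(R_{G_1*G_2}/I_r(G_1*G_2))=r$, whereas the right-hand side is $\max\{0,0\}=0$. The paper's proof tacitly assumes $s\ge r$ (equivalently, at least one of $I_r(G_1),I_r(G_2)$ is nonzero), and under that hypothesis both your proof and the paper's go through verbatim; but this case cannot be ``disposed of'' as you suggest.
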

Moreover, as an application of \Cref{intro main theorem} we provide explicit formulas for the Betti numbers of $I_r(G)$ when $G$ is a complete graph (\Cref{complete graph}) and complete multipartite graph (\Cref{bipartite}). We also relate the ideal $I_r(G)$ with the $t$-path ideal of $G$, when $G$ is a cycle graph (see \Cref{cycle graph}). Using this we provide formulas for the graded Betti numbers of $I_r(W_{n+1})$, where $W_{n+1}$ is the wheel graph on $n+1$ vertices (\Cref{wheel graph}). Moreover, formulas for the Castelnuovo-Mumford regularity of these ideals are obtained in \Cref{corollary 1}, \Cref{corollary 2} and \Cref{corollary 3}.

The paper is organized as follows. In \Cref{Section 2} we recall some useful concepts from graph theory, Stanley-Reisner theory, and related topics. In \Cref{Section 3} we give a proof of \Cref{intro main theorem}. Using this result, in \Cref{Section 4}, we provide explicit formulas for all the $\N$-graded Betti numbers of $R_G/I_r(G)$ for some families of graphs $G$.


\section{Preliminaries}\label{Section 2}

In this section, we recall some basic concepts from graph theory and commutative algebra.

A {\it simplicial complex} $\Delta$ on the vertex set $V(\Delta)$ is a collection of subsets of $V(\Delta)$ satisfying the following two conditions (i) $\{x_i\}\in \Delta$ for each $x_i\in V(\Delta)$; (ii) if $F\in \Delta$ and $F'\subseteq F$ then $F'\in \Delta$. The elements of $\Delta$ are called the {\it faces} of $\Delta$ and the maximal faces are called the {\it facets} of $\Delta$. If $F_1,\ldots, F_t$ are some faces of $\Delta$ such that for each $F\in\Delta$, $F\subseteq F_i$ for some $i$, then we say that $\Delta$ is generated by the faces $F_1,\ldots, F_t$ and write $\Delta=\langle F_1,\ldots, F_t \rangle$. For $F\in\Delta$, the {\it dimension} of $F$ is defined as $\dim F=|F|-1$ and we define $\dim\Delta=\max_{F\in \Delta}\dim F$. If $\Delta$ is a simplicial complex such that it contains exactly one maximal face then $\Delta$ is called a {\it simplex} on the vertex set $V(\Delta)$. If $\Delta$ contains exactly one maximal face and $\dim\Delta=d$ then it is called a {\it $d$-simplex}. For a simplicial complex $\Delta$ the {\it $k$-skeleton} of $\Delta$, denoted by $\Delta^k$, is the simplicial complex on the vertex set $V(\Delta)$ with faces $\{F\in\Delta\mid \dim F\le k\}$.

Let $\H$ be a hypergraph on the vertex set $\V$ and edge set $\E$. A subset $A\subseteq \V$ is called an independent set if $E\nsubseteq A$ for each $E\in \E$. The collection of all independent sets forms a simplicial complex called the {\it independence complex} of $\H$ and it is denoted by $\mathrm{Ind}(\H)$.

Let $R=\K[x_1,\ldots,x_n]$ be the polynomial ring on $n$ variables over a field $\K$. If $F\subseteq \{x_1,\ldots,x_n\}$ then by $\mathbf{x}_F$ we denote the monomial $\prod_{x_i\in F}x_i$ in $R$. A square-free monomial ideal $I$ of $R$ can be associated with a simplicial complex $\Delta(I)$ on the vertex set $\{x_1,\ldots,x_n\}$, whose faces are $\{F\mid \mathbf x_F\notin I\}$. The complex $\Delta(I)$ is called the {\it Stanley-Reisner complex} of $I$. Conversely, given a simplicial complex $\Delta$, the ideal $I_{\Delta}=\langle 
\mathbf{x}_{F}\mid F\notin \Delta \rangle$ is called the {\it Stanley-Reisner ideal} of $\Delta$. Thus we have a one-to-one correspondence between simplicial complexes on the vertex set $\{x_1,\ldots,x_n\}$ and square-free monomial ideals in $\K[x_1,\ldots,x_n]$ and this correspondence is called the {\it Stanley-Reisner correspondence}. If $\H$ is a hypergraph on the vertex set $\V=\{x_1,\ldots,x_n\}$ and edge set $\E$ then the Stanley Reisner ideal of $\mathrm{Ind}(H)$ is the ideal $I(\H)=\langle \mathbf{x}_E:=\prod_{x_i\in E}x_i\mid E\in \E \rangle$. The ideal $I(\H)$ is commonly called as the {\it (hyper)edge ideal} of $\H$ and this provides a one-to-one correspondence between the hypergraphs on the vertex set $\{x_1,\ldots,x_n\}$ and the square-free monomial ideals in $\K[x_1,\ldots,x_n]$.

In this paper, we are interested to study the Betti numbers and regularity of $R_G/I(\C)$, where $G$ is any finite simple graph and $r$ is a positive integer. Recall that for a homogeneous ideal $I \subseteq R = \mathbb{K}[x_1,\ldots,x_n]$, the {\it graded minimal free resolution} of $R/I$ is the long exact sequence 
$$0\rightarrow \bigoplus_{j \in \mathbb{N}} R(-j)^{\beta_{p,j}(R/I)}
\rightarrow \bigoplus_{j \in \mathbb{N}} R(-j)^{\beta_{p-1,j}(R/I)}
\rightarrow \cdots \rightarrow \bigoplus_{j \in \mathbb{N}}
R(-j)^{\beta_{1,j}(R/I)} \rightarrow R \rightarrow R/I \rightarrow 0,$$
where $R(-j)$ denotes the polynomial ring $\mathbb K[x_1,\ldots,x_n]$ with the grading
twisted by $j$, and $p \leq n$.  The numbers $\beta_{i,j}(R/I)$ are called the {\it $(i,j)$-th graded Betti numbers of $R/I$}. See \cite{RHV} for more on graded free resolution.

\begin{definition}
 The {\it Castelnuovo-Mumford regularity}, or simply the regularity of $R/I$ is defined
    to be ${\rm reg}(R/I) = \max\{j-i ~|~ \beta_{i,j}(R/I) \neq 0\}$.
\end{definition}

 One of the most important tools for computing the Betti numbers of a hyperedge ideal or equivalently any square-free monomial ideal associated to a simplicial complex is the following formula by Hochster.

\begin{proposition}\textup{\cite[Hochster's formula]{MH}}\label{Hochster}
Let $\Delta$ be a simplicial complex on the vertex set $\{x_1,\ldots,x_n\}$. Let $R/I_{\Delta}$ denote its Stanley-Reisner ring in the polynomial ring $R=\mathbb K[x_1,\ldots,x_n]$. Then for $i \geq 0$, the Betti numbers $\beta_{i,d}$ of $R/I_{\Delta}$ are given by 
{\footnotesize 
\[ \beta_{i,d}(R/I_{\Delta})= \sum_{\underset{|W|=d}{W \subseteq V(\Delta)}} \dim_{\mathbb{K}} \widetilde{H}_{d-i-1}(\Delta[W]; \mathbb{K}).\]} Here $\Delta[W]=\{F\in\Delta\mid F\subseteq W\}$  is the simplicial subcomplex of $\Delta$ induced on the vertex set $W$.
\end{proposition}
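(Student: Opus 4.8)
The plan is to compute the graded Betti numbers as dimensions of $\mathrm{Tor}$ modules and to exploit the fine $\mathbb{Z}^n$-grading carried by the squarefree monomial ideal $I_\Delta$. Recall that $\beta_{i,d}(R/I_\Delta)=\dim_\K\mathrm{Tor}_i^R(R/I_\Delta,\K)_d$. Since both $R$ and $I_\Delta$ are $\mathbb{Z}^n$-graded, assigning each variable $x_j$ the degree $\mathbf{e}_j$, the modules $\mathrm{Tor}_i^R(R/I_\Delta,\K)$ inherit a $\mathbb{Z}^n$-grading refining the standard one, so that $\mathrm{Tor}_i^R(R/I_\Delta,\K)_d=\bigoplus_{|\mathbf{a}|=d}\mathrm{Tor}_i^R(R/I_\Delta,\K)_{\mathbf{a}}$. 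First I would record the standard fact that, because $I_\Delta$ is squarefree, these multigraded Tor modules vanish unless $\mathbf{a}\in\{0,1\}^n$; each surviving multidegree then corresponds to a subset $W\subseteq V(\Delta)$ with $|W|=d$, which is the source of the sum over induced subcomplexes.

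Next I would compute $\mathrm{Tor}$ using the Koszul complex $K_\bullet=K_\bullet(x_1,\ldots,x_n)$, which is a $\mathbb{Z}^n$-graded free resolution of $\K$ over $R$; thus $\mathrm{Tor}_i^R(R/I_\Delta,\K)=H_i(K_\bullet\otimes_R R/I_\Delta)$. The degree-$j$ term of $K_\bullet\otimes_R R/I_\Delta$ has an $R/I_\Delta$-basis $\{e_\sigma:\sigma\subseteq\{x_1,\ldots,x_n\},\ |\sigma|=j\}$, with $e_\sigma$ placed in multidegree $\mathbf{1}_\sigma$, and the differential is the Koszul differential. The core of the argument is to extract the component of this complex in a fixed squarefree multidegree $W$ and identify it combinatorially.

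The heart of the proof is this identification. In multidegree $W$, a basis element $m\,e_\sigma$ survives precisely when $\sigma\subseteq W$, the complementary monomial is the squarefree monomial on $\tau:=W\setminus\sigma$, and $m\,e_\sigma\neq 0$ in $R/I_\Delta$, that is $\tau\in\Delta$; hence $\tau$ is a face of the induced subcomplex $\Delta[W]$ of dimension $|W|-j-1$. Writing $d=|W|$ and reindexing $\sigma\leftrightarrow\tau=W\setminus\sigma$, I would verify that the Koszul differential, which deletes an element of $\sigma$, becomes the simplicial coboundary, which adjoins an element to $\tau$, with matching signs. This yields an isomorphism of complexes between the multidegree-$W$ strand of $K_\bullet\otimes_R R/I_\Delta$ and the reduced simplicial cochain complex of $\Delta[W]$, shifted so that homological degree $i$ corresponds to cochain degree $d-i-1$ and the top Koszul term ($\sigma=W$, $\tau=\emptyset$) supplies the augmentation. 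Consequently $\mathrm{Tor}_i^R(R/I_\Delta,\K)_W\cong\widetilde{H}^{\,d-i-1}(\Delta[W];\K)$.

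Finally, since $\K$ is a field, reduced simplicial cohomology and homology of $\Delta[W]$ have equal dimensions in each degree, so $\dim_\K\widetilde{H}^{\,d-i-1}(\Delta[W];\K)=\dim_\K\widetilde{H}_{d-i-1}(\Delta[W];\K)$. Summing $\dim_\K\mathrm{Tor}_i^R(R/I_\Delta,\K)_W$ over all $W$ with $|W|=d$ then gives the claimed formula. I expect the main obstacle to be the bookkeeping in the combinatorial identification of the third step, namely getting the homological indexing $d-i-1$ and the comparison of the Koszul signs with the simplicial coboundary signs exactly right, whereas the reduction to squarefree support and the field-coefficient duality are routine.
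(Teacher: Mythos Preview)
Your proof outline is the standard and correct argument for Hochster's formula: compute $\mathrm{Tor}$ via the Koszul resolution, use the $\mathbb{Z}^n$-grading to localize at squarefree multidegrees, and identify the resulting strand with the reduced cochain complex of the induced subcomplex. The bookkeeping you flag (the shift to $d-i-1$ and the sign comparison) is routine once one fixes conventions.

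However, note that the paper does not supply its own proof of this proposition at all. It appears in the preliminaries section as a quoted result, with an explicit citation to Hochster's original paper, and is simply used as a black box in the rest of the article. So there is no ``paper's proof'' to compare against; your proposal is a correct proof of a result the authors take for granted.
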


In our case $\Delta=\mathrm{Ind}(\C)=\mathrm{Ind}_r(G)$ and $I_{\Delta}=I(\C)=I_r(G)$, where $\mathrm{Ind}_r(G)$ is the $r$-independence complex of $G$. A subset $A\subseteq V(G)$ is called $r$-independent if each connected component of the induced subgraph $G[A]$ has vertex cardinality at most $r$. The collection of all $r$-independent sets forms the simplicial complex $\mathrm{Ind}_r(G)$. If $G=G_1*G_2$, then $\mathrm{Ind}_1(G)$ is the disjoint union of $\mathrm{Ind}_1(G_1)$ and $\mathrm{Ind}_1(G_2)$. However, for $r\ge 2$ the complex $\mathrm{Ind}_r(G)$ can be written as a union of two simplicial complexes with nonempty intersection (see \Cref{prop1}). By Hochster's formula, in order to calculate the Betti numbers $R_G/I_r(G)$ we need to calculate the homologies of the induced subcomplexes of $\mathrm{Ind}_r(G)$ and we do this by using the Mayer Vietoris sequences. 

Let $\Lambda$ be a simplicial complex with two simplicial subcomplexes $\Lambda_1$ and $\Lambda_2$ such that $\Lambda=\Lambda_1\cup\Lambda_2$. Then we have an exact sequence of (reduced) chain complexes 
\[
0\rightarrow \mathcal C_{\cdot}(\Lambda_1\cap\Lambda_2)\xrightarrow{\varphi_{\cdot}} \mathcal C_{\cdot}(\Lambda_1)\oplus \mathcal C_{\cdot}(\Lambda_2)\xrightarrow{\psi_{\cdot}} \mathcal C_{\cdot}(\Lambda)\rightarrow 0,
\]
where the maps $\varphi_{\cdot}$ and $\psi_{\cdot}$ are defined by $\varphi_{\cdot}(x)=(x,-x)$ and $\psi_{\cdot}(y,z)=y+z$, respectively. This induces the following long exact sequence of reduced homologies, called the Mayer-Vietoris sequence.
\[
\cdots\rightarrow \widetilde H_n(\Lambda_1\cap\Lambda_2)\rightarrow \widetilde H_n(\Lambda_1)\oplus\widetilde H_n(\Lambda_2)\rightarrow \widetilde H_n(\Lambda)\xrightarrow{\partial}\widetilde H_{n-1}(\Lambda_1\cap\Lambda_2)\rightarrow\cdots,
\]
where $\partial$ is the connecting homomorphism. For more on Mayer-Vietoris sequence one can consult \cite[Section 2.2]{AH}. In the next section, we make extensive use of the Mayer-Vietoris sequence to compute the Betti numbers of $I_r(G)$, where $G=G_1*G_2$.


\section{Betti numbers of the hyperedge ideal}\label{Section 3}
In this section we give a proof of \Cref{intro main theorem}, i.e.,  we express the graded Betti numbers $\beta_{i,d}(R_{G_1*G_2}/I_r(G_1*G_2))$ in terms of the graded Betti numbers of $R_{G_1}/I_r(G_1)$ and $R_{G_2}/I_r(G_2)$. The first step to achieve this is to determine the simplicial complex $\rindj$. 

\begin{proposition}\label{prop1}
    Let $G_1$ and $G_2$ be two graphs on the vertex sets $\{x_1,\ldots,x_n\}$ and $\{y_1,\ldots,y_m\}$, respectively. Moreover, let $\Gamma$ denote the simplicial complexes $\mathrm{Ind}_r(G_1*G_2)$. Then 
    {\footnotesize
    \begin{align*}
        \Gamma&=\mathrm{Ind}_r(G_1)\cup \mathrm{Ind}_r(G_2)\cup \left\langle\left\{S\sqcup T\mid S\subseteq V(G_1), T\subseteq V(G_2), 1\le |S|, 1\le |T|, \text{ and } |S|+|T|\le r\right\}\right\rangle .
    \end{align*}
}
\end{proposition}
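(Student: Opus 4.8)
The plan is to describe the faces of $\Gamma=\mathrm{Ind}_r(G_1*G_2)$ explicitly and then match them against the faces of the complex on the right-hand side. Given $A\subseteq V(G_1*G_2)$, write $S=A\cap V(G_1)$ and $T=A\cap V(G_2)$, so that $A=S\sqcup T$. The structural observation driving everything is that in the join $G_1*G_2$ every vertex of $V(G_1)$ is adjacent to every vertex of $V(G_2)$; hence, whenever both $S$ and $T$ are nonempty, the induced subgraph $(G_1*G_2)[A]$ is connected, so $A\in\Gamma$ if and only if $|A|=|S|+|T|\le r$. On the other hand, if $T=\emptyset$ then $(G_1*G_2)[A]=G_1[S]$, so $A\in\Gamma$ iff $S\in\mathrm{Ind}_r(G_1)$, and symmetrically if $S=\emptyset$. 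These three observations completely characterize the faces of $\Gamma$.

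First I would prove the inclusion $\supseteq$. The complexes $\mathrm{Ind}_r(G_1)$ and $\mathrm{Ind}_r(G_2)$ (regarded as subcomplexes on $V(G_1*G_2)$) lie in $\Gamma$ by the cases $T=\emptyset$ and $S=\emptyset$ above. For the third piece, it is enough to check that each generator $S\sqcup T$ with $1\le|S|$, $1\le|T|$, and $|S|+|T|\le r$ belongs to $\Gamma$, and this is immediate from the structural observation since $(G_1*G_2)[S\sqcup T]$ is then connected with at most $r$ vertices. Because $\Gamma$ is a simplicial complex, it contains the entire subcomplex generated by these sets.

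Next I would prove $\subseteq$. Let $A=S\sqcup T\in\Gamma$. If $T=\emptyset$ then $A=S\in\mathrm{Ind}_r(G_1)$, and similarly if $S=\emptyset$. If both are nonempty, the structural observation forces $|S|+|T|\le r$, so $A$ is itself one of the listed generators, hence a face of the third complex. The one point that deserves a line of care --- and the only place where bookkeeping is needed --- is to confirm that the generated subcomplex introduces nothing outside $\Gamma$: an arbitrary face of $\langle\{S\sqcup T\mid\cdots\}\rangle$ has the form $S'\sqcup T'$ with $S'\subseteq S$, $T'\subseteq T$ for some admissible generator, so $|S'|+|T'|\le r$; if one of $S',T'$ is empty, the face is a subset of $V(G_i)$ of size at most $r$ and so is automatically $r$-independent (each connected component has at most $r$ vertices), and otherwise it is again a generator. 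I do not expect any genuine obstacle here; the argument is a short case analysis, and the only thing to watch is the degenerate cases in which $S$ or $T$ is empty.
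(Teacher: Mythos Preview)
Your proposal is correct and follows essentially the same approach as the paper: both hinge on the observation that in $G_1*G_2$ any face meeting both vertex sets induces a connected subgraph, and then split into the three cases $T=\emptyset$, $S=\emptyset$, and both nonempty. The paper's proof is terser, arguing only the $\subseteq$ direction explicitly (and implicitly treating $\supseteq$ as obvious), whereas you spell out both inclusions and the degenerate subfaces of the generated complex; this extra care is harmless and the argument is the same.
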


\begin{proof}
    Let $F\in \Gamma$. If $F\subseteq V(G_i)$ for some $i$, then $F$ must be an $r$-independent set of $G_i$. Hence $F\in \mathrm{Ind}_r(G_i)$. Now let $F\cap V(G_i)\neq\emptyset$ for each $i\in [2]$. Then for each $x_u\in F\cap V(G_1)$ and $x_v\in F\cap V(G_2)$, $\{x_u,x_v\}\in E(G_1*G_2)$. Therefore, $(G_1*G_2)[F]$ is connected and hence $|F|\le r$. Thus if $F$ is a facet of $\Gamma$ with $F\cap V(G_i)\neq\emptyset$ for each $i\in [2]$, then $|F|=r$ and $F=S\sqcup T$, where $S=F\cap V(G_1)$ and $T=F\cap V(G_2)$ with $|S|,|T|\ge 1$ and $|S|+|T|=r$. This completes the proof.
\end{proof}

By Hochster's formula, we have
{\footnotesize
\[
\beta_{i,i+d}(R_{G_1*G_2}/I_r(G_1*G_2))= \sum_{\underset{|W|=i+d}{W \subseteq\{x_1,\ldots,x_n,y_1,\ldots,y_m\} }} \dim_{\mathbb{K}} \widetilde{H}_{d-1}(\Gamma[W]; \mathbb{K}),
\]}
where $\Gamma[W]=\{F\in \Gamma\mid F\subseteq W\}$ is the induced subcomplex of $\Gamma$ on the vertex set $W\subseteq V(G_1*G_1)=\{x_1,\ldots,x_n,y_1,\ldots,y_m\}$. Note that as $I_r(G_1*G_2)$ is a monomial ideal equigenerated in degree $r+1$, the only possible nonzero Betti numbers are $\beta_{i,i+d}(R_{G_1*G_2}/I_r(G_1*G_2))$, where $d\ge r$ (cf. \cite[Proposition 12.3]{IP} ). Thus it is enough to find $\dim_{\mathbb K}\widetilde H_{d-1}(\Gamma[W];\mathbb K)$ for $d\ge r$. In order to determine the homologies of the induced subcomplexes of $\Gamma$, we first divide $\Gamma$ into smaller subcomplexes $\Gamma_1$ and $\Gamma_2$ in the following way.
{\footnotesize
\begin{align*}
    \Gamma_1&=\mathrm{Ind}_r(G_1)\cup \left\langle\left\{S\sqcup T\mid S\subseteq V(G_1), T\subseteq V(G_2), 1\le |S|, 1\le |T|,\text{ and } |S|+|T|\le r\right\}\right\rangle,\\
        \Gamma_2&=\mathrm{Ind}_r(G_2)\cup \left\langle\left\{S\sqcup T\mid S\subseteq V(G_1), T\subseteq V(G_2), 1\le |S|, 1\le |T|,\text{ and } |S|+|T|\le r\right\}\right\rangle.
\end{align*}}
It is easy to see that $\Gamma=\Gamma_1\cup\Gamma_2$. Moreover,
{\footnotesize
\begin{align*}
    \Gamma_1\cap\Gamma_2= \left\langle\left\{S\sqcup T\mid S\subseteq V(G_1), T\subseteq V(G_2),1\le |S|, 1\le |T|, \text{ and } |S|+|T|\le r\right\}\right\rangle.
\end{align*}
}
We further subdivide $\Gamma_1$ and $\Gamma_2$ in the following way. Let $\Gamma_1=\Gamma_{11}\cup \Gamma_{12}$ and $\Gamma_2=\Gamma_{21}\cup\Gamma_{22}$, where $\Gamma_{11}=\mathrm{Ind}_r(G_1)$, $\Gamma_{22}=\mathrm{Ind}_r(G_2)$ and $\Gamma_{12}=\Gamma_{21}=\Gamma_1\cap\Gamma_{2}$. Note that, $\Gamma_{11}\cap\Gamma_{12}=\langle \{S\subseteq V(G_1)\mid |S|=r-1\} \rangle$. Similarly, $\Gamma_{21}\cap\Gamma_{22}=\langle \{T\subseteq V(G_2)\mid |T|=r-1\} \rangle$.

The following proposition describes how the induced subcomplex $\Gamma[W]$ for $W\subseteq V(\Gamma)$, gets divided into the corresponding induced subcomplexes.

\begin{proposition}\label{subcomplex on subset}
    Let $\Delta$ be a simplicial complex on the vertex set $V$. Let $\Delta_1$ and $\Delta_2$ be two simplicial subcomplexes of $\Delta$ on the vertex sets $V_1$ and $V_2$, respectively, such that $\Delta=\Delta_1\cup\Delta_2$. Then for any $W\subseteq V$,
\begin{enumerate}[(i)]
    \item $\Delta[W]=\Delta_1[W\cap V_1]\cup\Delta_2[W\cap V_2]$;
    \item $\Delta_1[W]\cap\Delta_2[W]=(\Delta_1\cap\Delta_2)[W]$;
    \item $\Delta_1[W]=\Delta_1[W\cap V_1]$ and $\Delta_2[W]=\Delta_2[W\cap V_2]$.
\end{enumerate}
\end{proposition}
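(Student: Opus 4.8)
The plan is to prove the three statements in the order (iii), (ii), (i), since (iii) is the elementary observation that the other two build on. The single fact driving everything is that a simplicial subcomplex $\Delta_i$ on the vertex set $V_i$ has all of its faces contained in $V_i$; combined with the definition $\Delta[W]=\{F\in\Delta\mid F\subseteq W\}$ from \Cref{Hochster}, this immediately controls which faces survive a restriction.

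First I would establish (iii). For any $W\subseteq V$ and any face $F\in\Delta_1$ we have $F\subseteq V_1$, and hence $F\subseteq W$ if and only if $F\subseteq W\cap V_1$. Passing to the collection of all such $F$ yields $\Delta_1[W]=\Delta_1[W\cap V_1]$, and the identical argument with $V_2$ in place of $V_1$ gives $\Delta_2[W]=\Delta_2[W\cap V_2]$. Part (ii) is then a direct unwinding of definitions that does not even use the vertex sets: a face $F$ lies in $\Delta_1[W]\cap\Delta_2[W]$ precisely when $F\in\Delta_1$, $F\in\Delta_2$, and $F\subseteq W$, which is exactly the defining condition for $F\in(\Delta_1\cap\Delta_2)[W]$.

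Finally, (i) follows by combining the hypothesis $\Delta=\Delta_1\cup\Delta_2$ with (iii). A face $F$ lies in $\Delta[W]$ if and only if $F\subseteq W$ and $F\in\Delta_1$ or $F\in\Delta_2$, i.e.\ if and only if $F\in\Delta_1[W]$ or $F\in\Delta_2[W]$; thus $\Delta[W]=\Delta_1[W]\cup\Delta_2[W]$, and rewriting each summand via (iii) gives $\Delta[W]=\Delta_1[W\cap V_1]\cup\Delta_2[W\cap V_2]$.

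I do not anticipate any genuine obstacle: the statement is purely set-theoretic bookkeeping about induced subcomplexes. The one point that warrants a moment's care is to make explicit that \textquotedblleft subcomplex on the vertex set $V_i$\textquotedblright\ is used in the form \emph{every face of $\Delta_i$ is contained in $V_i$}, so that intersecting the restricting set $W$ with $V_i$ discards no face of $\Delta_i$; once this is said, each of the three parts is a one-line verification.
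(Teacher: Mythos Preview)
Your proof is correct and matches the paper's approach: both arguments hinge on the observation that every face of $\Delta_i$ lies in $V_i$, so restriction by $W$ and by $W\cap V_i$ coincide on $\Delta_i$. The only cosmetic difference is ordering---the paper proves (i) directly and remarks that (iii) follows from the definition, whereas you isolate (iii) first and use it to deduce (i)---but the substance is identical.
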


\begin{proof}
    Let $\tau\in \Delta[W]$. Then $\tau\subseteq W\subseteq V$ and $\tau\in\Delta$. Since $\Delta=\Delta_1\cup\Delta_2$, either $\tau\in\Delta_1$ or $\tau\in\Delta_2$. Therefore, either $\tau\subseteq W\cap V_1$ or $\tau\subseteq W\cap V_2$. Thus we have $(i)$. For $(ii)$, it is enough to observe that if $\tau\in \Delta_1[W]\cap\Delta_2[W]$, then $\tau\in\Delta_1\cap\Delta_2$ and $\tau\subseteq W$. Also, $(iii)$ follows from the definition of induced subcomplexes.
\end{proof}

We now proceed to compute the $\K$-vector space dimension of reduced homologies of various induced subcomplexes.

\begin{lemma}\label{intersection lemma}
    Let $W_1\subseteq V(G_1)$ and $W_2\subseteq V(G_2)$ such that $|W_1|=t_1\ge 1$ and $|W_2|=t_2\ge 1$, respectively. If $W=W_1\sqcup W_2\subseteq V(G_1*G_2)$, then
    {\footnotesize
    \[
    \mathrm{dim}_{\K}(\widetilde H_d((\Gamma_1\cap\Gamma_2)[W];\mathbb K)=\begin{cases}
        {t_1+t_2-1\choose r}-{t_1\choose r}-{t_2\choose r}&\text{ if }d=r-1\\
        0 &\text{ otherwise}.
    \end{cases}
    \]}
\end{lemma}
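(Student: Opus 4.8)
Write $W=W_1\sqcup W_2$ with $|W_i|=t_i$, put $\Delta:=(\Gamma_1\cap\Gamma_2)[W]$, and assume $r\ge 2$ (for $r=1$ the complex $\Gamma_1\cap\Gamma_2$ plays no role in this section and both sides vanish). The first step is to make $\Delta$ completely explicit. Unwinding the description of $\Gamma_1\cap\Gamma_2$ in \Cref{prop1}, a subset $F\subseteq W$ lies in $\Delta$ iff it is contained in some generator $S\sqcup T$, and choosing $|S|=\max(|F\cap W_1|,1)$, $|T|=\max(|F\cap W_2|,1)$ shows that
\[
\Delta=\bigl\{F\subseteq W \;:\; \max(|F\cap W_1|,1)+\max(|F\cap W_2|,1)\le r\bigr\}.
\]
In particular every subset of $W$ of size at most $r-1$ is a face of $\Delta$, every face of $\Delta$ has size at most $r$, and the faces of $\Delta$ of size exactly $r$ are precisely the \emph{mixed} ones, i.e.\ those meeting both $W_1$ and $W_2$. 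Let $\Sigma$ be the $(r-1)$-skeleton of the full simplex on $W$ (all subsets of $W$ of size $\le r$). Then $\Delta\subseteq\Sigma$, the two complexes have the same faces in every dimension $\ne r-1$, and in dimension $r-1$ the faces of $\Sigma$ outside $\Delta$ are exactly the $\binom{t_1}{r}+\binom{t_2}{r}$ subsets of size $r$ lying entirely inside $W_1$ or inside $W_2$. Hence the relative chain complex $C_\bullet(\Sigma,\Delta)$ is concentrated in degree $r-1$, so $H_d(\Sigma,\Delta)=0$ for $d\ne r-1$ and $H_{r-1}(\Sigma,\Delta)\cong\mathbb K^{\,\binom{t_1}{r}+\binom{t_2}{r}}$; and by the classical computation of the homology of a skeleton of a simplex, $\widetilde H_\bullet(\Sigma)$ is concentrated in degree $r-1$ with $\widetilde H_{r-1}(\Sigma)\cong\mathbb K^{\,\binom{t_1+t_2-1}{r}}$.

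The only non-formal input is the vanishing $\widetilde H_{r-2}(\Delta)=0$. Since $\Delta$ and $\Sigma$ share the same $(r-2)$-skeleton, they have the same space $Z_{r-2}$ of $(r-2)$-cycles, and it is enough to show that $Z_{r-2}$ equals the space $B_{r-2}(\Delta)$ of boundaries of $(r-1)$-chains of $\Delta$. Because $\widetilde H_{r-2}(\Sigma)=0$, the space $Z_{r-2}$ is spanned by the cycles $\partial H$ as $H$ runs over the $r$-subsets of $W$. If $H$ is mixed then $H\in\Delta$ and $\partial H\in B_{r-2}(\Delta)$ already, so it remains to treat a pure $H\subseteq W_1$ with $|H|=r$. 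Fix $b\in W_2$ (possible since $t_2\ge 1$) and apply $\partial\circ\partial=0$ to the $(r+1)$-set $H\cup\{b\}$: the $(r-1)$-faces of $H\cup\{b\}$ are $H$ itself and the sets $(H\setminus\{a\})\cup\{b\}$ for $a\in H$, so $\partial H$ is a $\mathbb K$-linear combination of the cycles $\partial\bigl((H\setminus\{a\})\cup\{b\}\bigr)$. Each set $(H\setminus\{a\})\cup\{b\}$ has size $r$ and is mixed (here $r\ge 2$ ensures it still meets $W_1$), hence is a face of $\Delta$, so all these cycles lie in $B_{r-2}(\Delta)$. The same argument applies with the roles of $W_1$ and $W_2$ exchanged, so $Z_{r-2}=B_{r-2}(\Delta)$ and $\widetilde H_{r-2}(\Delta)=\widetilde H_{r-2}(\Sigma)=0$.

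Now feed everything into the long exact homology sequence of the pair $(\Sigma,\Delta)$,
\[
\cdots\to H_{d+1}(\Sigma,\Delta)\to\widetilde H_d(\Delta)\to\widetilde H_d(\Sigma)\to H_d(\Sigma,\Delta)\to\widetilde H_{d-1}(\Delta)\to\cdots .
\]
For $d\notin\{r-2,r-1\}$ both $\widetilde H_d(\Sigma)$ and $H_{d+1}(\Sigma,\Delta)$ vanish, so $\widetilde H_d(\Delta)=0$; for $d=r-2$ this was just shown; and for $d=r-1$ the sequence collapses to $0\to\widetilde H_{r-1}(\Delta)\to\widetilde H_{r-1}(\Sigma)\to H_{r-1}(\Sigma,\Delta)\to 0$, giving $\dim_{\mathbb K}\widetilde H_{r-1}(\Delta)=\binom{t_1+t_2-1}{r}-\binom{t_1}{r}-\binom{t_2}{r}$, which is the asserted formula. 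One could equally replace this last step by the Mayer--Vietoris sequence for $\Sigma=\Delta\cup\Theta$, where $\Theta$ is the union of the $(r-1)$-skeleta of the simplices on $W_1$ and on $W_2$ (matching the recurring technique of the paper), at the cost of recombining the binomials via Pascal's identity $\binom{t_i-1}{r}+\binom{t_i-1}{r-1}=\binom{t_i}{r}$.

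The main obstacle, and the only point where the join structure really enters, is the vanishing $\widetilde H_{r-2}(\Delta)=0$: once $\Delta$ has been identified, the rest is a formal diagram chase, but this step needs the explicit boundary-rewriting argument above, which pushes any $r$-face lying entirely in $W_1$ (or in $W_2$) into the mixed region using a single vertex from the opposite part and the relation $\partial^2=0$. Some care with degenerate parameter ranges (e.g.\ $t_1+t_2<r$, or $t_i<r$) should also be noted, but in each such case $\Delta$ is a full simplex and all three binomials vanish, so the formula still holds.
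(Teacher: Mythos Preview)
Your argument is correct and complete. The explicit description of $\Delta$, the comparison with the skeleton $\Sigma$, the relative homology computation, and the chain-level verification of $\widetilde H_{r-2}(\Delta)=0$ via $\partial^2=0$ are all sound. One small imprecision: in your closing remark the case ``$t_i<r$'' by itself does \emph{not} make $\Delta$ a full simplex (only $t_1+t_2\le r$ does); but this is harmless, since your main argument already handles all $t_1,t_2\ge 1$ uniformly and the relevant binomials simply vanish when $t_i<r$.

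Your route is genuinely different from the paper's. The paper identifies $(\Gamma_1\cap\Gamma_2)[W]$ as the $(r-1)$-skeleton of $\mathrm{Ind}_{r-1}(K_{W_1}\sqcup K_{W_2})$, proves this complex is Cohen--Macaulay (by tensoring the CM rings $R_{W_i}/I_{r-1}(K_{W_i})$ and then invoking that skeleta of CM complexes are CM), concludes that the reduced homology is concentrated in the top degree, and reads off $\dim_{\mathbb K}\widetilde H_{r-1}$ from the Euler characteristic. You instead compare $\Delta$ directly with the skeleton $\Sigma$ of the full simplex via the long exact sequence of the pair $(\Sigma,\Delta)$, and replace the Cohen--Macaulay input by an elementary boundary-rewriting argument showing $\widetilde H_{r-2}(\Delta)=0$. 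The paper's approach is more structural and buys the vanishing $\widetilde H_{<r-1}(\Delta)=0$ in one stroke; yours is more self-contained, avoids any commutative-algebra machinery, and yields the top Betti number directly from the short exact sequence rather than through an Euler characteristic count.
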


\begin{proof}
    We first show that $\intersectionw$ is a Cohen-Macaulay simplicial complex. Note that
    {\footnotesize
\begin{align*}
    (\Gamma_1\cap\Gamma_2)[W]= \left\langle\left\{S\sqcup T\mid S\subseteq W_1, T\subseteq W_2, 1\le |S|, 1\le |T|, \text{ and } |S|+|T|\le r\right\}\right\rangle.
\end{align*}}

Let $K_{W_1}$ and $K_{W_2}$ denote the complete graph on the vertex sets $W_1$ and $W_2$, respectively. Then $\intersectionw=\Delta^{r-1}$, where $\Delta=\mathrm{Ind}_{r-1}(K_{W_1}\sqcup K_{W_2})$. It is easy to see that if $t_1+t_2\le r$, then $\intersectionw$ is a simplex. Now let $I$, $I_1$ and $I_2$ denote the Stanley-Reisner ideals of $\Delta$, $\mathrm{Ind}_{r-1}(K_{W_1})$ and $\mathrm{Ind}_{r-1}(K_{W_2})$, respectively. Then $I=I_1+I_2$. Note that $\mathrm{Ind}_{r-1}(K_{W_1})$ is a Cohen-Macaulay simplicial complex since it is the $(r-2)$-skeleton of the simplex on the vertex set $|W_1|$. Similarly, $\mathrm{Ind}_{r-1}(K_{W_2})$ is also a Cohen-Macaulay simplicial complex. Let $R_{W_1}=\K[x_i\mid x_i\in W_1]$ and $R_{W_2}=\K[x_i\mid x_i\in W_2]$. Then $R_{W_1}/I_1$ is a Cohen-Macaulay $R_{W_1}$-module. Similarly, $R_{W_2}/I_2$ is a Cohen-Macaulay $R_{W_2}$-module. We have $R_{W_1}/I_1\otimes_{\K}R_{W_2}/I_2\cong R_W/I$, where $R_W=\K[x_i\mid x_i\in W]$ (by \cite[Proposition 2.2.20]{RHV}). Therefore, by \cite[Corollary 2.2.22]{RHV}, $R_W/I$ is a Cohen-Macaulay $R$-module. Since $I$ is the Stanley-Reisner ideal of $\Delta$, the simplicial complex $\Delta$ is Cohen-Macaulay. Hence by \cite[Proposition 5.3.14]{RHV}, $\intersectionw=\Delta^{r-1}$ is a Cohen-Macaulay simplicial complex.

If $t_1+t_2\le r$, then $\intersectionw$ is a simplex and hence all the homologies are zero. Therefore, we may assume that $t_1+t_2>r$. In that case $\dim(\intersectionw)=r-1$. Let $a_j$ denote the number of $j$-dimensional simplices of $\intersectionw$. Then $a_{r-1}={t_1+t_2\choose r}-{t_1\choose r}-{t_2\choose r}$. Also, for $0\le j\le r-2$, $a_j={t_1+t_2\choose j+1}$. Therefore, the Euler characteristic of the simplicial complex $\intersectionw$ is
{\footnotesize
\[
\sum_{j=0}^{r-1}(-1)^j{t_1+t_2\choose j+1}+(-1)^r\left[{t_1\choose r}+{t_2\choose r}\right]=1+(-1)^{r-1}{t_1+t_2-1\choose r}+(-1)^r\left[{t_1\choose r}+{t_2\choose r}\right].
\]
}
Since the homologies of $\intersectionw$ are concentrated in dimensions $0$ and $r-1$, the Euler characteristic of $\intersectionw$ is the number $\dim_{\K}(H_0((\Gamma_1\cap\Gamma_2)[W];\K))+(-1)^{r-1}\dim_{\K}(H_{r-1}((\Gamma_1\cap\Gamma_2)[W];\K))$. Therefore, $\dim_{\K}(\widetilde H_{r-1}((\Gamma_1\cap\Gamma_2)[W];\K))={t_1+t_2-1\choose r}-{t_1\choose r}-{t_2\choose r}$. This completes the proof.
\end{proof}

\begin{lemma}\label{further intersection lemma1}
    Let $W_1\subseteq V(G_1)$ such that $|W_1|=t_1\ge 1$. Then
   {\footnotesize
    \[
    \mathrm{dim}_{\K}(\widetilde H_d((\Gamma_{11}\cap\Gamma_{12})[W_1];\mathbb K)=\begin{cases}
        {t_1-1\choose r-1}&\text{ if }d=r-2\\
        0 &\text{ otherwise}.
    \end{cases}
    \]}
\end{lemma}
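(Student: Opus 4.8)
The plan is to identify the complex $(\Gamma_{11}\cap\Gamma_{12})[W_1]$ explicitly and then argue along the lines of the proof of \Cref{intersection lemma}. Recall from the discussion preceding the lemma that $\Gamma_{11}\cap\Gamma_{12}=\langle\{S\subseteq V(G_1)\mid |S|=r-1\}\rangle$, so its faces are exactly the subsets of $V(G_1)$ of cardinality at most $r-1$. Taking the induced subcomplex on $W_1$ therefore produces the collection of all subsets of $W_1$ of cardinality at most $r-1$; that is, $(\Gamma_{11}\cap\Gamma_{12})[W_1]$ is the $(r-2)$-skeleton $\Sigma^{r-2}$ of the full simplex $\Sigma$ on the vertex set $W_1$.

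If $t_1\le r-1$, then every subset of $W_1$ has cardinality at most $r-1$, so $\Sigma^{r-2}=\Sigma$ is a simplex, hence contractible; all reduced homology vanishes, and since $\binom{t_1-1}{r-1}=0$ in this range the claimed formula holds. So I may assume $t_1\ge r$, in which case $\dim\big((\Gamma_{11}\cap\Gamma_{12})[W_1]\big)=r-2$.

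Next I would observe that $\Sigma^{r-2}$ is Cohen-Macaulay: the simplex $\Sigma$ is Cohen-Macaulay, and by \cite[Proposition 5.3.14]{RHV} any skeleton of a Cohen-Macaulay complex is again Cohen-Macaulay. Hence (by Reisner's criterion applied to the empty face) $\widetilde H_d(\Sigma^{r-2};\K)=0$ for $d\ne r-2$, so the reduced homology is concentrated in degree $r-2$. It then remains to compute the reduced Euler characteristic. Since $\Sigma^{r-2}$ has $\binom{t_1}{j+1}$ faces of dimension $j$ for $-1\le j\le r-2$, the identity $\sum_{i=0}^{k}(-1)^i\binom{n}{i}=(-1)^k\binom{n-1}{k}$ gives
\[
\widetilde\chi\big(\Sigma^{r-2}\big)=\sum_{j=-1}^{r-2}(-1)^j\binom{t_1}{j+1}=(-1)^{r}\binom{t_1-1}{r-1}.
\]
On the other hand $\widetilde\chi(\Sigma^{r-2})=(-1)^{r-2}\dim_{\K}\widetilde H_{r-2}(\Sigma^{r-2};\K)$ because the homology sits in degree $r-2$; comparing the two expressions yields $\dim_{\K}\widetilde H_{r-2}(\Sigma^{r-2};\K)=\binom{t_1-1}{r-1}$, which is the assertion.

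I do not anticipate a genuine obstacle here; the only points requiring a little care are the degenerate case $t_1\le r-1$ and the bookkeeping of signs in the Euler characteristic. If one prefers, the Cohen-Macaulay/Euler characteristic step can be replaced wholesale by the classical fact that the $k$-skeleton of an $n$-simplex is homotopy equivalent to a wedge of $\binom{n}{k+1}$ copies of $S^k$: taking $n=t_1-1$ and $k=r-2$ immediately gives $\widetilde H_{r-2}(\Sigma^{r-2};\K)\cong\K^{\binom{t_1-1}{r-1}}$ and vanishing in every other degree.
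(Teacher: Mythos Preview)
Your argument is correct and follows essentially the same route as the paper: identify $(\Gamma_{11}\cap\Gamma_{12})[W_1]$ as the $(r-2)$-skeleton of the simplex on $W_1$, dispose of the case $t_1<r$ as a simplex, invoke Cohen--Macaulayness of skeleta to concentrate the homology in the top degree, and read off the top Betti number from the Euler characteristic. The only cosmetic difference is that you work with the reduced Euler characteristic directly (and offer the wedge-of-spheres shortcut), while the paper phrases the computation via the unreduced Euler characteristic as in \Cref{intersection lemma}.
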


\begin{proof}
    Note that $(\Gamma_{11}\cap\Gamma_{12})[W_1]=\langle \{S\subseteq W_1\mid |S|\le r-1\} \rangle$. Thus if $t_1<r$, then $(\Gamma_{11}\cap\Gamma_{12})[W_1]$ is a simplex. Therefore, we may assume that $t_1\ge r$. In that case, $(\Gamma_{11}\cap\Gamma_{12})[W_1]$ is the $(r-2)$-skeleton of the simplex on the vertex set $W_1$. Therefore, $(\Gamma_{11}\cap\Gamma_{12})[W_1]$ is a Cohen-Macaulay simplicial complex. Thus the homologies of $(\Gamma_{11}\cap\Gamma_{12})[W_1]$ are concentrated in dimensions $0$ and $r-2$. Let $a_j$ denote the number of $j$-dimensional simplices of $(\Gamma_{11}\cap\Gamma_{12})[W_1]$. Then for $0\le j\le r-2$, $a_j={t_1\choose j+1}$. Proceeding in the similar way as in Lemma \ref{intersection lemma}, we have $\mathrm{dim}_{\K}(\widetilde H_{r-2}((\Gamma_{11}\cap\Gamma_{12})[W_1];\mathbb K)={t_1-1\choose r-1}$.
\end{proof}

Similarly, we have the following lemma for induced subcomplexes of  $\Gamma_{21}\cap\Gamma_{22}$.
\begin{lemma}\label{further intersection lemma2}
    Let $W_2\subseteq V(G_2)$ such that $|W_2|=t_2\ge 1$. Then
   {\footnotesize
    \[
    \mathrm{dim}_{\K}(\widetilde H_d((\Gamma_{21}\cap\Gamma_{22})[W_2];\mathbb K)=\begin{cases}
        {t_2-1\choose r-1}&\text{ if }d=r-2\\
        0 &\text{ otherwise}.
    \end{cases}
    \]}
\end{lemma}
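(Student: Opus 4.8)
The plan is to mirror, almost verbatim, the argument for \Cref{further intersection lemma1}, exchanging the roles of $G_1$ and $G_2$ (equivalently, of $W_1,t_1$ with $W_2,t_2$). The starting point is the description of $\Gamma_{21}\cap\Gamma_{22}$ recorded just before the lemma, namely $\Gamma_{21}\cap\Gamma_{22}=\langle\{T\subseteq V(G_2)\mid |T|=r-1\}\rangle$; combined with \Cref{subcomplex on subset}(iii) this gives
\[
(\Gamma_{21}\cap\Gamma_{22})[W_2]=\langle\{T\subseteq W_2\mid |T|\le r-1\}\rangle,
\]
which is exactly the $(r-2)$-skeleton of the simplex on the vertex set $W_2$. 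Note in particular that this induced subcomplex depends only on $W_2$, not on the edge set of $G_2$.

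Next I would split into cases according to $t_2$. If $t_2\le r-1$, then $(\Gamma_{21}\cap\Gamma_{22})[W_2]$ is the full simplex on $W_2$, all of its reduced homology vanishes, and since $\binom{t_2-1}{r-1}=0$ in this range the asserted formula holds trivially. If $t_2\ge r$, then $(\Gamma_{21}\cap\Gamma_{22})[W_2]$ is genuinely the $(r-2)$-skeleton of the $(t_2-1)$-simplex; being a skeleton of a Cohen--Macaulay complex it is itself Cohen--Macaulay (cf. \cite[Proposition 5.3.14]{RHV}), so its reduced homology is concentrated in the top degree $r-2$ (the complex is connected when $r\ge 2$, and the degenerate case $r=1$ is checked by hand against the convention $\widetilde H_{-1}(\{\emptyset\};\K)=\K$). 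It then remains to compute $\dim_\K\widetilde H_{r-2}$, which I would do exactly as in the proof of \Cref{intersection lemma}: the number of $j$-dimensional faces is $a_j=\binom{t_2}{j+1}$ for $0\le j\le r-2$, so evaluating the reduced Euler characteristic and using the alternating binomial identity $\sum_{k=0}^{r-1}(-1)^k\binom{t_2}{k}=(-1)^{r-1}\binom{t_2-1}{r-1}$ yields $\dim_\K\widetilde H_{r-2}((\Gamma_{21}\cap\Gamma_{22})[W_2];\K)=\binom{t_2-1}{r-1}$.

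Since every step here is the $G_2$-avatar of a step already carried out for $\Gamma_{11}\cap\Gamma_{12}$, I do not anticipate any genuine obstacle; the only points needing a little care are the bookkeeping of the degenerate small cases ($t_2\le r-1$, and $r=1$) and phrasing the Euler-characteristic computation with the \emph{reduced} Euler characteristic so that the empty face is counted. Alternatively, and more briefly: since $(\Gamma_{21}\cap\Gamma_{22})[W_2]$ is, as noted above, literally the same combinatorial object appearing in \Cref{further intersection lemma1} with $W_2$ in place of $W_1$ (it is insensitive to which graph the vertices came from), the statement can be deduced in a single line from \Cref{further intersection lemma1}.
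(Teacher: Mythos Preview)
Your proposal is correct and matches the paper's approach: the paper simply states this lemma with the remark ``Similarly, we have the following lemma\ldots'' and gives no separate proof, intending exactly the symmetric argument from \Cref{further intersection lemma1} that you spell out. Your final observation that the result is literally \Cref{further intersection lemma1} with $W_2$ in place of $W_1$ is the cleanest way to say it.
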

Now we proceed to determine the homologies of the induced subcomplexes of $\Gamma$.

\begin{lemma}\label{r-1 homology lemma}
    Let $W_1\subseteq V(G_1)$ and $W_2\subseteq V(G_2)$ such that $|W_1|=t_1\ge 1$ and $|W_2|=t_2\ge 1$, respectively. If $W=W_1\sqcup W_2\subseteq V(G_1*G_2)$, then
{\footnotesize
    \begin{align*}
     &\dim_{\K}(\widetilde H_{r-1}(\Gamma[W];\K))\\
     &=\dim_{\K}(\widetilde H_{r-1}(\Gamma_{11}[W_1];\K))+\dim_{\K}(\widetilde H_{r-1}(\Gamma_{22}[W_2];\K))+{t_1+t_2-1\choose r}-{t_1-1\choose r}-{t_2-1\choose r}.   
    \end{align*}}
     \end{lemma}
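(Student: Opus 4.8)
The plan is to run the Mayer--Vietoris sequence twice, peeling off one piece at a time; throughout I assume $r\ge 2$ (for $r=1$ the complex $\Gamma[W]$ is the disjoint union $\Gamma_{11}[W_1]\sqcup\Gamma_{22}[W_2]$ and the formula is immediate). Since $\Gamma_{12}=\Gamma_{21}\subseteq\Gamma_1$ we have $\Gamma=\Gamma_1\cup\Gamma_{22}$, so \Cref{subcomplex on subset}, applied to this decomposition and then to $\Gamma_1=\Gamma_{11}\cup\Gamma_{12}$, gives
\[
\Gamma[W]=\Gamma_1[W]\cup\Gamma_{22}[W_2],\qquad \Gamma_1[W]\cap\Gamma_{22}[W_2]=(\Gamma_{21}\cap\Gamma_{22})[W_2],
\]
\[
\Gamma_1[W]=\Gamma_{11}[W_1]\cup\Gamma_{12}[W],\qquad \Gamma_{11}[W_1]\cap\Gamma_{12}[W]=(\Gamma_{11}\cap\Gamma_{12})[W_1],
\]
using that $\Gamma_{11},\Gamma_{22}$ are supported on $V(G_1),V(G_2)$ and that $\Gamma_{11}\cap\Gamma_{22}$ contributes only the empty face. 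From \Cref{intersection lemma}, \Cref{further intersection lemma1} and \Cref{further intersection lemma2}: $\widetilde H_\bullet(\Gamma_{12}[W])$ is concentrated in degree $r-1$ of dimension $c:=\binom{t_1+t_2-1}{r}-\binom{t_1}{r}-\binom{t_2}{r}$, and $\widetilde H_\bullet((\Gamma_{11}\cap\Gamma_{12})[W_1])$, $\widetilde H_\bullet((\Gamma_{21}\cap\Gamma_{22})[W_2])$ are concentrated in degree $r-2$ of dimensions $\binom{t_1-1}{r-1}$, $\binom{t_2-1}{r-1}$.

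The heart of the argument is that every Mayer--Vietoris connecting map that could mix these contributions is zero. I would establish two facts. \emph{(i)} The inclusion-induced map $\widetilde H_{r-2}((\Gamma_{11}\cap\Gamma_{12})[W_1])\to\widetilde H_{r-2}(\Gamma_{11}[W_1])$ vanishes, and likewise with $G_1,W_1$ replaced by $G_2,W_2$. Indeed, any $r$-subset $F$ of $W_1$ has $|F|=r$, so each component of $G_1[F]$ has at most $r$ vertices and $F$ is $r$-independent; hence $\Gamma_{11}[W_1]=\mathrm{Ind}_r(G_1)[W_1]$ contains the entire $(r-1)$-skeleton of the simplex on $W_1$, whereas $(\Gamma_{11}\cap\Gamma_{12})[W_1]$ is its $(r-2)$-skeleton. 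The full simplex being acyclic, any $(r-2)$-cycle of the $(r-2)$-skeleton is the boundary of an $(r-1)$-chain supported on $r$-subsets of $W_1$, all of which already lie in $\Gamma_{11}[W_1]$; so the cycle bounds in $\Gamma_{11}[W_1]$. \emph{(ii)} The inclusion-induced map $\widetilde H_{r-2}((\Gamma_{21}\cap\Gamma_{22})[W_2])\to\widetilde H_{r-2}(\Gamma_1[W])$ vanishes. Here $(\Gamma_{21}\cap\Gamma_{22})[W_2]$ is the $(r-2)$-skeleton of the simplex on $W_2$, and every subset of $W_2$ of size at most $r-1$ is a face of $\Gamma_{12}[W]=(\Gamma_1\cap\Gamma_2)[W]$ (it sits inside a generator $\{x\}\sqcup T$ with $x\in W_1$); thus the inclusion factors through $\Gamma_{12}[W]$, whose degree-$(r-2)$ homology vanishes by \Cref{intersection lemma}.

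With (i) and (ii) in hand the rest is bookkeeping with the long exact sequences. In the Mayer--Vietoris sequence of $\Gamma_1[W]=\Gamma_{11}[W_1]\cup\Gamma_{12}[W]$ the degree-$(r-1)$ homology of the intersection is zero, so $\widetilde H_{r-1}(\Gamma_{11}[W_1])\oplus\widetilde H_{r-1}(\Gamma_{12}[W])$ injects into $\widetilde H_{r-1}(\Gamma_1[W])$ with cokernel the kernel of $\widetilde H_{r-2}((\Gamma_{11}\cap\Gamma_{12})[W_1])\to\widetilde H_{r-2}(\Gamma_{11}[W_1])\oplus\widetilde H_{r-2}(\Gamma_{12}[W])$, which by (i) and $\widetilde H_{r-2}(\Gamma_{12}[W])=0$ is the whole group; hence $\dim_{\K}\widetilde H_{r-1}(\Gamma_1[W])=\dim_{\K}\widetilde H_{r-1}(\Gamma_{11}[W_1])+c+\binom{t_1-1}{r-1}$. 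Running the same argument for $\Gamma[W]=\Gamma_1[W]\cup\Gamma_{22}[W_2]$, whose intersection is $(\Gamma_{21}\cap\Gamma_{22})[W_2]$, and invoking (i) for $(G_2,W_2)$ together with (ii), gives $\dim_{\K}\widetilde H_{r-1}(\Gamma[W])=\dim_{\K}\widetilde H_{r-1}(\Gamma_1[W])+\dim_{\K}\widetilde H_{r-1}(\Gamma_{22}[W_2])+\binom{t_2-1}{r-1}$. Combining the two identities and rewriting $c+\binom{t_1-1}{r-1}+\binom{t_2-1}{r-1}$ via Pascal's identity $\binom{t_i}{r}=\binom{t_i-1}{r}+\binom{t_i-1}{r-1}$ yields $\binom{t_1+t_2-1}{r}-\binom{t_1-1}{r}-\binom{t_2-1}{r}$, the claimed formula. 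The degenerate ranges ($t_1<r$, $t_2<r$, $t_1+t_2\le r$, and $r=2$) need no separate treatment, since \Cref{intersection lemma}, \Cref{further intersection lemma1} and \Cref{further intersection lemma2} already encode that the relevant complexes are simplices there with the stated (possibly zero) dimensions, while (i)--(ii) hold trivially whenever the source homology vanishes.

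The main obstacle is proving the vanishing statements (i) and (ii): this is precisely where the combinatorics of $r$-independence enters, via the fact that an $r$-element set is automatically $r$-independent (so $\mathrm{Ind}_r(G_i)[W_i]$ is full up to its $(r-1)$-skeleton) and via the fact, recorded in \Cref{intersection lemma}, that $(\Gamma_1\cap\Gamma_2)[W]$ has no homology below degree $r-1$. Once these are settled, everything else is two applications of the Mayer--Vietoris sequence together with Pascal's identity.
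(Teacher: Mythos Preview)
Your proof is correct but uses a genuinely different decomposition from the paper's. The paper splits symmetrically as $\Gamma[W]=\Gamma_1[W]\cup\Gamma_2[W]$ with intersection $(\Gamma_1\cap\Gamma_2)[W]$, and the main technical step there is a chain-level argument that the connecting map $\partial^r:\widetilde H_r(\Gamma[W])\to\widetilde H_{r-1}((\Gamma_1\cap\Gamma_2)[W])$ vanishes, using that every $r$-dimensional face of $\Gamma[W]$ lies entirely in $W_1$ or in $W_2$; only then do the contributions from $\Gamma_1$ and $\Gamma_2$ separate, and the term $\dim_{\K}\widetilde H_{r-1}((\Gamma_1\cap\Gamma_2)[W])$ appears once on each side and cancels. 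You instead split asymmetrically as $\Gamma[W]=\Gamma_1[W]\cup\Gamma_{22}[W_2]$, whose intersection is only the $(r-2)$-skeleton $(\Gamma_{21}\cap\Gamma_{22})[W_2]$; since this has no $(r-1)$-homology, no $\partial^r$ argument is needed at all. The price is your fact (ii), but that is a clean one-line factoring through $\Gamma_{12}[W]$ (whose $(r-2)$-homology vanishes by \Cref{intersection lemma}), so on balance your route is more elementary: it replaces the paper's explicit diagram chase with a purely topological observation. Your fact (i) is essentially the same as the paper's, which in fact proves the slightly stronger statement $\widetilde H_{r-2}(\Gamma_{11}[W_1])=0$ by noting that $C_i(\Gamma_{11}[W_1])=C_i(\Delta_{W_1})$ for $i\le r-1$; your argument implicitly establishes the same thing.
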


\begin{proof}
    Using \Cref{subcomplex on subset} for the simplicial complex $\Gamma=\Gamma_1\cup\Gamma_2$, we have $\Gamma[W]=\Gamma_1[W]\cup\Gamma_2[W]$. Moreover, $\Gamma_1[W]\cap\Gamma_2[W]=(\Gamma_1\cap\Gamma_2)[W]$. Similarly, $\Gamma_1[W]=\Gamma_{11}[W_1]\cup\Gamma_{12}[W]$, where $\Gamma_{11}[W_1]\cap\Gamma_{12}[W]=(\Gamma_{11}\cap\Gamma_{12})[W_1]$. Also, $\Gamma_2[W]=\Gamma_{21}[W]\cup\Gamma_{22}[W_2]$, where $\Gamma_{21}[W]\cap\Gamma_{22}[W_2]=(\Gamma_{21}\cap\Gamma_{22})[W_2]$.

    Using the Mayer-Vietoris sequence for $\Gamma[W]=\Gamma_1[W]\cup\Gamma_2[W]$, we have the following long exact sequence.
{\footnotesize
    \begin{equation}\label{eq1}
    \begin{split}
        \cdots \rightarrow \widetilde H_r(\Gamma[W];\K)\xrightarrow{\partial^r} \widetilde H_{r-1}((\Gamma_1\cap\Gamma_2)[W];\K)\rightarrow \widetilde H_{r-1}(\Gamma_1[W];\K)\oplus \widetilde H_{r-1}(\Gamma_2[W];\K) \\
        \rightarrow \widetilde H_{r-1}(\Gamma[W];\K)\xrightarrow{\partial^{r-1}} \widetilde H_{r-2}((\Gamma_1\cap\Gamma_2)[W];\K)\rightarrow\cdots.
    \end{split}
    \end{equation}}
    We proceed to show that the map $\partial^r$ in \Cref{eq1} is the zero map. We have the following chain of maps.
    
    {\scriptsize
\begin{tikzcd}[cells={nodes={minimum height=2em}}]
& \vdots\arrow[d]&\vdots\arrow[d] &\vdots\arrow[d] &\\
0\arrow[r] & C_r((\Gamma_1\cap\Gamma_2)[W];\K) \arrow[r,"\varphi_r"] \arrow[d,"\delta_r^{(\Gamma_1\cap\Gamma_2)[W]}"] &  C_r(\Gamma_1[W];\K)\oplus C_r(\Gamma_2[W];\K)) \arrow[r,"\psi_r"] \arrow[d,"\theta_r^{(\Gamma_1[W],\Gamma_2[W])}"] &  C_r(\Gamma[W];\K) \arrow[r]\arrow[d,"\delta_r^{\Gamma[W]}"] & 0\\
0 \arrow[r] & C_{r-1}((\Gamma_1\cap\Gamma_2)[W];\K) \arrow[r,"\varphi_{r-1}"] \arrow[d,"\delta_{r-1}^{(\Gamma_1\cap\Gamma_2)[W]}"]  & C_{r-1}(\Gamma_1[W];\K)\oplus C_{r-1}(\Gamma_2[W];\K)) \arrow[r,"\psi_{r-1}"] \arrow[d,"\theta_{r-1}^{(\Gamma_1[W],\Gamma_2[W])}"] & C_{r-1}(\Gamma[W];\K) \arrow[r]\arrow[d,"\delta_{r-1}^{\Gamma[W]}"] & 0\\
 0 \arrow[r] & C_{r-2}((\Gamma_1\cap\Gamma_2)[W];\K) \arrow[r,"\varphi_{r-2}"] \arrow[d]  & C_{r-2}(\Gamma_1[W];\K)\oplus C_{r-2}(\Gamma_2[W];\K)) \arrow[r,"\psi_{r-2}"] \arrow[d] & C_{r-2}(\Gamma[W];\K) \arrow[r]\arrow[d] & 0\\
 & \vdots&\vdots &\vdots &
\end{tikzcd}}

Here $\delta_i$'s are the corresponding chain maps. Also, $\varphi_i(x)=(x,-x)$ for $x\in C_i((\Gamma_1\cap\Gamma_2)[W];\K)$, $\psi_i(y,z)=y+z$, and $\theta_i^{(\Gamma_1[W],\Gamma_2[W])}(y,z)=(\delta_i^{\Gamma_1[W]}(y),\delta_i^{\Gamma_2[W]}(z))$ for $y\in C_i(\Gamma_1[W];\K), z\in C_i(\Gamma_2[W];\K)$. Now let $[u]\in \widetilde H_r(\Gamma[W];\K)$. Then $u\in \ker \delta_r^{\Gamma[W]}$. Hence $u=\psi_r((a,b))=a+b$, for some $a\in C_r(\Gamma_1[W];\K) $ and $b\in C_r(\Gamma_2[W];\K)$. Therefore, $a=\sum c_{i_1,\ldots,i_{r+1}}e_{\{x_{i_1},\ldots,x_{i_{r+1}}\}}$ and $b=\sum d_{j_1,\ldots,j_{r+1}}e_{\{y_{j_1},\ldots,y_{j_{r+1}}\}}$, where $e_{\{x_{i_1},\ldots,x_{i_{r+1}}\}}\in C_r(\Gamma_1[W];\K)$, $e_{\{y_{j_1},\ldots,y_{j_{r+1}}\}}\in C_r(\Gamma_2[W];\K)$, and $c_{i_1,\ldots,i_{r+1}},d_{j_1,\ldots,j_{r+1}}\in\K$. Note that $\delta_r^{\Gamma[W]}(\psi_r((a,b)))=0$. In other words, 
{\footnotesize
\[
\sum c_{i_1,\ldots,i_{r+1}}\delta_r^{\Gamma_1[W]}(e_{\{x_{i_1},\ldots,x_{i_{r+1}}\}})+\sum d_{j_1,\ldots,j_{r+1}}\delta_r^{\Gamma_2[W]}(e_{\{y_{j_1},\ldots,y_{j_{r+1}}\}})=0.
\]}
Now since $C_r(\Gamma[W])$ is a $\K$-vector space with a basis $\{e_{\{x_{i_1},\ldots,x_{i_{r+1}}\}},e_{\{y_{j_1},\ldots,y_{j_{r+1}}\}}\}$, where $\{x_{i_1},\ldots,x_{i_{r+1}}\}\subseteq W_1$ and $\{y_{j_1},\ldots,y_{j_{r+1}}\}\subseteq W_2$, we have $\delta_r^{\Gamma_1[W]}(a)=\delta_r^{\Gamma_2[W]}(b)=0$. Consequently, $\partial^r([u])=[\delta_r^{\Gamma_1[W]}(a)]=[\delta_r^{\Gamma_2[W]}(b)]=0$. Thus the map $\partial^r$ in \Cref{eq1} is the zero map. Also, by \Cref{intersection lemma}, $\widetilde H_{r-2}((\Gamma_1\cap\Gamma_2)[W];\K)=0$. Therefore, from \Cref{eq1} we have
{\footnotesize
\begin{align}\label{eq2}
    \widetilde H_{r-1}(\Gamma_1[W];\K)\oplus \widetilde H_{r-1}(\Gamma_2[W];\K)\cong \widetilde H_{r-1}((\Gamma_1\cap\Gamma_2)[W];\K)\oplus \widetilde H_{r-1}(\Gamma[W];\K).
\end{align}}

Now we proceed to obtain the $\K$-vector space dimensions of the homologies $\widetilde H_{r-1}(\Gamma_1[W];\K)$ and $\widetilde H_{r-1}(\Gamma_2[W];\K)$. Using the Mayer-Vietoris sequence for the simplicial complex $\Gamma_1[W]=\Gamma_{11}[W_1]\cup\Gamma_{12}[W]$ we have the following long exact sequence.
{\footnotesize
\begin{equation}\label{eq3}
    \begin{split}
        \cdots \rightarrow \widetilde H_{r-1}((\Gamma_{11}\cap\Gamma_{12})[W_1];\K)\rightarrow \widetilde H_{r-1}(\Gamma_{11}[W_1];\K)\oplus \widetilde H_{r-1}(\Gamma_{12}[W];\K)
        \rightarrow \widetilde H_{r-1}(\Gamma_1[W];\K)\\
        \xrightarrow{\partial^{r-1}} \widetilde H_{r-2}((\Gamma_{11}\cap\Gamma_{12})[W_1];\K)\rightarrow\cdots.
    \end{split}
    \end{equation}}

    In the next step, we show that the map $\partial^{r-1}$ in \Cref{eq3} is surjective. For this it is enough to show that $\widetilde H_{r-2}((\Gamma_{11}\cap\Gamma_{12})[W_1];\K)\xrightarrow{i^*} \widetilde H_{r-2}(\Gamma_{11}[W_1];\K)\oplus \widetilde H_{r-2}(\Gamma_{12}[W];\K)$ is the zero map. Again we have the following chain of maps.

     {\scriptsize
\begin{tikzcd}[cells={nodes={minimum height=2em}}]
& \vdots\arrow[d]&\vdots\arrow[d] &\vdots\arrow[d] &\\
0\arrow[r] & C_{r-1}((\Gamma_{11}\cap\Gamma_{12})[W_1];\K) \arrow[r,"\varphi_{r-1}"] \arrow[d,"\delta_{r-1}^{(\Gamma_{11}\cap\Gamma_{12})[W_1]}"] &  C_{r-1}(\Gamma_{11}[W_1];\K)\oplus C_{r-1}(\Gamma_{12}[W];\K)) \arrow[r,"\psi_{r-1}"] \arrow[d,"\theta_{r-1}^{(\Gamma_{11}[W_1],\Gamma_{12}[W])}"] &  C_{r-1}(\Gamma_1[W];\K) \arrow[r]\arrow[d,"\delta_{r-1}^{\Gamma_1[W]}"] & 0\\
0\arrow[r] & C_{r-2}((\Gamma_{11}\cap\Gamma_{12})[W_1];\K) \arrow[r,"\varphi_{r-2}"] \arrow[d,"\delta_{r-2}^{(\Gamma_{11}\cap\Gamma_{12})[W_1]}"] &  C_{r-2}(\Gamma_{11}[W_1];\K)\oplus C_{r-2}(\Gamma_{12}[W];\K)) \arrow[r,"\psi_{r-2}"] \arrow[d,"\theta_{r-2}^{(\Gamma_{11}[W_1],\Gamma_{12}[W])}"] &  C_{r-2}(\Gamma_1[W];\K) \arrow[r]\arrow[d,"\delta_{r-2}^{\Gamma_1[W]}"] & 0\\
0\arrow[r] & C_{r-3}((\Gamma_{11}\cap\Gamma_{12})[W_1];\K) \arrow[r,"\varphi_{r-3}"] \arrow[d] &  C_{r-3}(\Gamma_{11}[W_1];\K)\oplus C_{r-3}(\Gamma_{12}[W];\K)) \arrow[r,"\psi_{r-3}"] \arrow[d] &  C_{r-3}(\Gamma_1[W];\K) \arrow[r]\arrow[d] & 0\\
 & \vdots&\vdots &\vdots &
\end{tikzcd}}

Let $[u]\in \widetilde H_{r-2}((\Gamma_{11}\cap\Gamma_{12})[W_1];\K)$. Then $i^*([u])=([u],-[u])$, where $u$ is considered to be element of both $C_{r-2}(\Gamma_{11}[W_1];\K)$ and $C_{r-2}(\Gamma_{12}[W];\K))$. Let $\Delta_{W_1}$ denote the $(|W_1|-1)$-simplex on the vertex set $W_1$. Then $C_i(\Gamma_{11}[W_1])=C_i(\Delta_{W_1})$ for all $i\le r-1$. Thus $\widetilde H_{r-2}(\Gamma_{11}[W_1];\K)=\widetilde H_{r-2}(\Delta_{W_1};\K)=0$. Moreover, by \Cref{intersection lemma}, $\widetilde H_{r-2}(\Gamma_{12}[W];\K)=\widetilde H_{r-2}((\Gamma_{1}\cap\Gamma_2)[W];\K)=0$. Therefore, we have that $i^*$ is the zero map, and hence $\partial^{r-1}$ in \Cref{eq3} is surjective. By  \Cref{further intersection lemma1}, $\widetilde H_{r-1}((\Gamma_{11}\cap\Gamma_{12})[W_1];\K)=0$. Hence from \Cref{eq3} we have,
{\footnotesize
    \begin{align}\label{eq4}
        \widetilde H_{r-1}(\Gamma_1[W];\K)\cong \widetilde H_{r-1}(\Gamma_{11}[W_1];\K)\oplus \widetilde H_{r-1}(\Gamma_{12}[W];\K)\oplus \widetilde H_{r-2}((\Gamma_{11}\cap\Gamma_{12})[W_1];\K).
    \end{align}}
    A similar calculation for $\Gamma_2[W]=\Gamma_{22}[W_2]\cup\Gamma_{21}[W]$ yields the following.
{\footnotesize
\begin{align}\label{eq5}
        \widetilde H_{r-1}(\Gamma_2[W];\K)\cong \widetilde H_{r-1}(\Gamma_{22}[W_2];\K)\oplus \widetilde H_{r-1}(\Gamma_{21}[W];\K)\oplus \widetilde H_{r-2}((\Gamma_{21}\cap\Gamma_{22})[W_2];\K).
    \end{align}}

    Therefore, using \Cref{intersection lemma}, \Cref{further intersection lemma1}, \Cref{further intersection lemma2}, \Cref{eq2}, \Cref{eq4} and \Cref{eq5}, we have,

{\footnotesize
    \begin{align*}
     &\dim_{\K}(\widetilde H_{r-1}(\Gamma[W];\K))\\
     &=\dim_{\K}(\widetilde H_{r-1}(\Gamma_{11}[W_1];\K))+\dim_{\K}(\widetilde H_{r-1}(\Gamma_{22}[W_2];\K))+{t_1+t_2-1\choose r}-{t_1-1\choose r}-{t_2-1\choose r}.   
    \end{align*}}
\end{proof}

\begin{lemma}\label{i homology lemma}
    Let $W_1\subseteq V(G_1)$ and $W_2\subseteq V(G_2)$ such that $|W_1|=t_1\ge 1$ and $|W_2|=t_2\ge 1$, respectively. If $W=W_1\sqcup W_2\subseteq V(G_1*G_2)$, then for all $i\ge r$,
{\footnotesize
    \begin{align*}
     \dim_{\K}(\widetilde H_{i}(\Gamma[W];\K))=\dim_{\K}(\widetilde H_{i}(\Gamma_{11}[W_1];\K))+\dim_{\K}(\widetilde H_{i}(\Gamma_{22}[W_2];\K)).   
    \end{align*}}
     \end{lemma}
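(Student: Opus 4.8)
The plan is to repeat, now in homological degrees $i\ge r$, the three successive Mayer--Vietoris arguments from the proof of \Cref{r-1 homology lemma}. The gain is that in this range the relevant intersection complexes have essentially no homology, so each Mayer--Vietoris sequence collapses to an isomorphism almost at once.

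First I would apply \Cref{subcomplex on subset} to $\Gamma=\Gamma_1\cup\Gamma_2$ to write $\Gamma[W]=\Gamma_1[W]\cup\Gamma_2[W]$ with $\Gamma_1[W]\cap\Gamma_2[W]=(\Gamma_1\cap\Gamma_2)[W]$, and consider the associated Mayer--Vietoris sequence. By \Cref{intersection lemma} the reduced homology of $(\Gamma_1\cap\Gamma_2)[W]$ is concentrated in degrees $0$ and $r-1$, so $\widetilde H_i((\Gamma_1\cap\Gamma_2)[W];\K)=0$ for all $i\ge r$ and $\widetilde H_{i-1}((\Gamma_1\cap\Gamma_2)[W];\K)=0$ for all $i\ge r+1$. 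Hence for $i\ge r+1$ the sequence already gives $\widetilde H_i(\Gamma[W];\K)\cong\widetilde H_i(\Gamma_1[W];\K)\oplus\widetilde H_i(\Gamma_2[W];\K)$; and for the single remaining degree $i=r$, the relevant connecting map is exactly the map $\partial^r\colon\widetilde H_r(\Gamma[W];\K)\to\widetilde H_{r-1}((\Gamma_1\cap\Gamma_2)[W];\K)$ that was shown to be zero in the proof of \Cref{r-1 homology lemma}, so the same isomorphism holds there as well. Thus $\widetilde H_i(\Gamma[W];\K)\cong\widetilde H_i(\Gamma_1[W];\K)\oplus\widetilde H_i(\Gamma_2[W];\K)$ for every $i\ge r$.

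Next I would identify the two summands. Applying \Cref{subcomplex on subset} to $\Gamma_1=\Gamma_{11}\cup\Gamma_{12}$ gives $\Gamma_1[W]=\Gamma_{11}[W_1]\cup\Gamma_{12}[W]$ with intersection $(\Gamma_{11}\cap\Gamma_{12})[W_1]$, whose reduced homology is concentrated in degrees $0$ and $r-2$ by \Cref{further intersection lemma1}; in particular both $\widetilde H_i$ and $\widetilde H_{i-1}$ of this intersection vanish for $i\ge r$, so the Mayer--Vietoris sequence yields $\widetilde H_i(\Gamma_1[W];\K)\cong\widetilde H_i(\Gamma_{11}[W_1];\K)\oplus\widetilde H_i(\Gamma_{12}[W];\K)$ for $i\ge r$. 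Since $\Gamma_{12}[W]=(\Gamma_1\cap\Gamma_2)[W]$ has no reduced homology in degrees $\ge r$ by \Cref{intersection lemma}, this reduces to $\widetilde H_i(\Gamma_1[W];\K)\cong\widetilde H_i(\Gamma_{11}[W_1];\K)$ for $i\ge r$. The identical argument applied to $\Gamma_2=\Gamma_{22}\cup\Gamma_{21}$, using \Cref{further intersection lemma2} and \Cref{intersection lemma}, gives $\widetilde H_i(\Gamma_2[W];\K)\cong\widetilde H_i(\Gamma_{22}[W_2];\K)$ for $i\ge r$. Substituting both isomorphisms into the one from the previous paragraph and taking $\K$-dimensions yields the claimed identity.

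The only non-formal point is the boundary degree $i=r$ in the first Mayer--Vietoris sequence, where $\widetilde H_{r-1}((\Gamma_1\cap\Gamma_2)[W];\K)$ can genuinely be nonzero; but the needed vanishing of the connecting map $\partial^r$ was already established at the chain level in the proof of \Cref{r-1 homology lemma} (via the explicit commutative diagram of chain groups), so here it can simply be quoted. No other obstacle is expected.
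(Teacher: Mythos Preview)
Your proposal is correct and follows essentially the same approach as the paper: both use the Mayer--Vietoris sequences for $\Gamma[W]=\Gamma_1[W]\cup\Gamma_2[W]$ and then for $\Gamma_k[W]=\Gamma_{k1}[W]\cup\Gamma_{k2}[W]$, invoke \Cref{intersection lemma}, \Cref{further intersection lemma1}, \Cref{further intersection lemma2} to kill the intersection homologies in degrees $\ge r-1$ (respectively $\ge r-2$), and quote the vanishing of $\partial^r$ established in the proof of \Cref{r-1 homology lemma} for the one remaining boundary case $i=r$. The only difference is organizational: you treat all $i\ge r$ together at each stage, whereas the paper first does $i=r$ completely and then $i\ge r+1$.
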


\begin{proof}
The proof follows the same line of arguments as in \Cref{r-1 homology lemma}.

Using the Mayer-Vietoris sequence for $\Gamma[W]=\Gamma_1[W]\cup\Gamma_2[W]$, we have the following long exact sequence of $\mathbb K$-vector spaces.
{\footnotesize
    \begin{equation}\label{eq6}
    \begin{split}
        \cdots \rightarrow \widetilde H_{r+1}(\Gamma[W];\K)\xrightarrow{\partial^{r+1}} \widetilde H_{r}((\Gamma_1\cap\Gamma_2)[W];\K)\rightarrow \widetilde H_{r}(\Gamma_1[W];\K)\oplus \widetilde H_{r}(\Gamma_2[W];\K) \\
        \rightarrow \widetilde H_{r}(\Gamma[W];\K)\xrightarrow{\partial^{r}} \widetilde H_{r-1}((\Gamma_1\cap\Gamma_2)[W];\K)\rightarrow\cdots.
    \end{split}
    \end{equation}}      
Note that $\widetilde H_{r}((\Gamma_1\cap\Gamma_2)[W];\K)=0$ (by \Cref{intersection lemma}). Moreover, by the proof of \Cref{r-1 homology lemma}, we see that $\partial^r$ in \Cref{eq6} is the zero map. Hence
{\footnotesize
\begin{align}\label{eq7}
    \widetilde H_{r}(\Gamma[W];\K)\cong \widetilde H_{r}(\Gamma_1[W];\K)\oplus \widetilde H_{r}(\Gamma_2[W];\K).
\end{align}}
For $\Gamma_1[W]=\Gamma_{11}[W_1]\cup\Gamma_{12}[W]$ we have the following Mayer-Vietoris sequence
{\footnotesize
\begin{equation}\label{eq8}
    \begin{split}
        \cdots \rightarrow \widetilde H_{r}((\Gamma_{11}\cap\Gamma_{12})[W_1];\K)\rightarrow \widetilde H_{r}(\Gamma_{11}[W_1];\K)\oplus \widetilde H_{r}(\Gamma_{12}[W];\K)
        \rightarrow \widetilde H_{r}(\Gamma_1[W];\K)\\
        \xrightarrow{\partial^{r}} \widetilde H_{r-1}((\Gamma_{11}\cap\Gamma_{12})[W_1];\K)\rightarrow\cdots.
    \end{split}
    \end{equation}}
    By \Cref{further intersection lemma1}, $\widetilde H_{r}((\Gamma_{11}\cap\Gamma_{12})[W_1];\K)=\widetilde H_{r-1}((\Gamma_{11}\cap\Gamma_{12})[W_1];\K)=0$. Also, by \Cref{intersection lemma}, we have $\widetilde H_{r}(\Gamma_{12}[W];\K)=\widetilde H_{r}((\Gamma_{1}\cap\Gamma_2)[W];\K)=0$. Therefore, from \Cref{eq8} we obtain $\widetilde H_{r}(\Gamma_1[W];\K)\cong \widetilde H_{r}(\Gamma_{11}[W_1];\K)$. A similar calculation for $\Gamma_2[W]=\Gamma_{21}[W]\cup\Gamma_{22}[W_2]$ yields $\widetilde H_{r}(\Gamma_2[W];\K)\cong \widetilde H_{r}(\Gamma_{22}[W_2];\K)$. Thus from \Cref{eq7} we get that $\widetilde H_{r}(\Gamma[W];\K)\cong \widetilde H_{r}(\Gamma_{11}[W_1];\K)\oplus \widetilde H_{r}(\Gamma_{22}[W_2];\K)$.

    Now let $i\ge r+1$. For the Mayer-Vietoris sequence
    {\footnotesize
    \begin{equation*}
    \begin{split}
        \cdots \rightarrow\widetilde H_{i}((\Gamma_1\cap\Gamma_2)[W];\K)\rightarrow \widetilde H_{i}(\Gamma_1[W];\K)\oplus \widetilde H_{i}(\Gamma_2[W];\K)
        \rightarrow \widetilde H_{i}(\Gamma[W];\K)\\
        \xrightarrow{\partial^{i}} \widetilde H_{i-1}((\Gamma_1\cap\Gamma_2)[W];\K)\rightarrow\cdots,
    \end{split}
    \end{equation*}} 
    we have $\widetilde H_{i}((\Gamma_1\cap\Gamma_2)[W];\K)=\widetilde H_{i-1}((\Gamma_1\cap\Gamma_2)[W];\K)=0$ (by \Cref{intersection lemma}). Therefore, $\widetilde H_{i}(\Gamma[W];\K)\cong \widetilde H_{i}(\Gamma_1[W];\K)\oplus \widetilde H_{i}(\Gamma_2[W];\K)$. Now for the Mayer-Vietoris sequence 
{\footnotesize
\begin{equation*}
    \begin{split}
        \cdots \rightarrow \widetilde H_{i}((\Gamma_{11}\cap\Gamma_{12})[W_1];\K)\rightarrow \widetilde H_{i}(\Gamma_{11}[W_1];\K)\oplus \widetilde H_{i}(\Gamma_{12}[W];\K)
        \rightarrow \widetilde H_{i}(\Gamma_1[W];\K)\\
        \xrightarrow{\partial^{i}} \widetilde H_{i-1}((\Gamma_{11}\cap\Gamma_{12})[W_1];\K)\rightarrow\cdots,
    \end{split}
    \end{equation*}}
we have $\widetilde H_{i}((\Gamma_{11}\cap\Gamma_{12})[W_1];\K)=\widetilde H_{i-1}((\Gamma_{11}\cap\Gamma_{12})[W_1];\K)=0$ (by \Cref{further intersection lemma1}). Moreover, $\widetilde H_{i}(\Gamma_{12}[W];\K)=\widetilde H_{i}((\Gamma_{1}\cap\Gamma_2)[W];\K)=0$ (by \Cref{intersection lemma}).  Therefore, from the long exact sequence above we get that $\widetilde H_{i}(\Gamma_1[W];\K)\cong \widetilde H_{i}(\Gamma_{11}[W_1];\K)$. Proceeding as above for the simplicial complex $\Gamma_2[W]$, we have $\widetilde H_{i}(\Gamma_2[W];\K)\cong \widetilde H_{i}(\Gamma_{22}[W_2];\K)$. Consequently, we obtain $\widetilde H_{i}(\Gamma[W];\K)\cong \widetilde H_{i}(\Gamma_{11}[W_1];\K)\oplus \widetilde H_{i}(\Gamma_{22}[W_2];\K)$ for all $i\ge r+1$. This completes the proof.

\end{proof}

Now we are ready to prove the main result of this section.

\begin{theorem}\label{main theorem}
    Let $G_1$ and $G_2$ be two graphs on the vertex sets $\{x_1,\ldots,x_n\}$ and $\{y_1,\ldots,y_m\}$, respectively. Let $r$ be a positive integer. Then the $\mathbb N$-graded Betti numbers of $R_{G_1*G_2}/I_r(G_1*G_2)$ can be expressed as follows. {\footnotesize
    \begin{align*}
    &\beta_{i,i+d}(R_{G_1*G_2}/I_r(G_1*G_2))\\
    &=\begin{cases}
        \sum_{j=0}^{i+d-1}\left\{{m\choose j}\beta_{i-j,i-j+d}(R_{G_1}/I_r(G_1))+{n\choose j}\beta_{i-j,i-j+d}(R_{G_2}/I_r(G_2)) \right\}+\\
\hspace{11em}   \sum_{j=1}^{i+d-1} \left[{i+d-1\choose d}-{i+d-1-j\choose d}-{j-1\choose d}\right]{m\choose j}{n\choose i+d-j}\hspace{2em}\text{ for }d=r,
 \\
    \sum_{j=0}^{i+d-1}\left\{{m\choose j}\beta_{i-j,i-j+d}(R_{G_1}/I_r(G_1))+{n\choose j}\beta_{i-j,i-j+d}(R_{G_2}/I_r(G_2)) \right\}\hspace{4.7em}\text{ for }d\ge r+1.
    \end{cases}
    \end{align*}}
\end{theorem}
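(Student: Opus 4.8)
The plan is a two-fold application of Hochster's formula (\Cref{Hochster}), glued together by the Mayer--Vietoris computations already carried out in \Cref{intersection lemma}--\Cref{i homology lemma}. First I would write, via Hochster's formula for $\Gamma=\mathrm{Ind}_r(G_1*G_2)$,
\[
\beta_{i,i+d}(R_{G_1*G_2}/I_r(G_1*G_2))=\sum_{\substack{W\subseteq V(G_1*G_2)\\ |W|=i+d}}\dim_{\K}\widetilde H_{d-1}(\Gamma[W];\K),
\]
recalling that only $d\ge r$ matters. For each such $W$ set $W_1=W\cap V(G_1)$ and $W_2=W\cap V(G_2)$, and split the sum into the three cases $W_2=\emptyset$, $W_1=\emptyset$, and $t_1:=|W_1|\ge 1$, $t_2:=|W_2|\ge 1$.

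For the degenerate cases I would observe that when $W\subseteq V(G_1)$ one has $\Gamma[W]=\mathrm{Ind}_r(G_1)[W]$ — indeed every subset of $V(G_1)$ of cardinality $\le r$ is $r$-independent, so the ``join'' generators of $\Gamma$ contribute no face inside $V(G_1)$ — and symmetrically when $W\subseteq V(G_2)$. Applying Hochster's formula to $G_1$ and to $G_2$, these two cases contribute exactly $\beta_{i,i+d}(R_{G_1}/I_r(G_1))$ and $\beta_{i,i+d}(R_{G_2}/I_r(G_2))$, which will become the $j=0$ summands of the final formula.

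For the case $t_1,t_2\ge 1$ I would invoke \Cref{r-1 homology lemma} when $d=r$ and \Cref{i homology lemma} when $d\ge r+1$ to get
\[
\dim_{\K}\widetilde H_{d-1}(\Gamma[W];\K)=\dim_{\K}\widetilde H_{d-1}(\mathrm{Ind}_r(G_1)[W_1];\K)+\dim_{\K}\widetilde H_{d-1}(\mathrm{Ind}_r(G_2)[W_2];\K)+E(t_1,t_2),
\]
with $E(t_1,t_2)=\binom{t_1+t_2-1}{r}-\binom{t_1-1}{r}-\binom{t_2-1}{r}$ if $d=r$ and $E(t_1,t_2)=0$ if $d\ge r+1$. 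Now sum over all $W$ with $|W|=i+d$. For a fixed $t_1$ (hence $t_2=i+d-t_1$) there are $\binom{m}{i+d-t_1}$ choices of $W_2$, while Hochster's formula for $G_1$ gives $\sum_{|W_1|=t_1}\dim_{\K}\widetilde H_{d-1}(\mathrm{Ind}_r(G_1)[W_1];\K)=\beta_{t_1-d,\,t_1}(R_{G_1}/I_r(G_1))$. The substitution $j=i+d-t_1$ turns this into $\binom{m}{j}\beta_{i-j,\,i-j+d}(R_{G_1}/I_r(G_1))$; adding the $j=0$ contribution from the $W_2=\emptyset$ case yields $\sum_{j=0}^{i+d-1}\binom{m}{j}\beta_{i-j,i-j+d}(R_{G_1}/I_r(G_1))$, and the analogous computation with the two sides swapped (so now $j=t_1$) gives the $G_2$ sum. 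Finally, when $d=r$, summing $E(t_1,t_2)$ over $t_1,t_2\ge 1$ with $t_1+t_2=i+d$, weighting by the number $\binom{n}{t_1}\binom{m}{t_2}$ of pairs $(W_1,W_2)$, and writing $j=t_2$ — so that $t_1+t_2-1=i+d-1$ and, since $d=r$, $\binom{i+d-1}{r}=\binom{i+d-1}{d}$, etc. — reproduces exactly the extra double sum $\sum_{j=1}^{i+d-1}\big[\binom{i+d-1}{d}-\binom{i+d-1-j}{d}-\binom{j-1}{d}\big]\binom{m}{j}\binom{n}{i+d-j}$.

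The homological content is entirely in \Cref{intersection lemma}--\Cref{i homology lemma}; the theorem itself is this translation via Hochster's formula plus a combinatorial summation. The step demanding the most care is the bookkeeping in the reassembly: keeping the substitution $j=|W_2|$ (equivalently $j=i+d-|W_1|$) consistent across the three summands, checking that the vanishing of the Betti numbers of $G_1$ and $G_2$ outside their natural ranges makes the stated summation bounds $0\le j\le i+d-1$ (and $1\le j\le i+d-1$ for the extra term) harmless, and verifying the sub-cases $t_1<r$ or $t_2<r$ in which the relevant induced subcomplexes degenerate to simplices and all the homology terms as well as the binomial differences in $E(t_1,t_2)$ vanish simultaneously, so that \Cref{r-1 homology lemma} and \Cref{i homology lemma} still apply verbatim.
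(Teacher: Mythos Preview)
Your proposal is correct and follows essentially the same route as the paper: apply Hochster's formula, split each $W$ as $W_1\sqcup W_2$, treat the degenerate cases $W_2=\emptyset$ and $W_1=\emptyset$ directly, invoke \Cref{r-1 homology lemma} and \Cref{i homology lemma} for the mixed case, and then reindex the sums. The only difference is cosmetic bookkeeping (and your extra caution about the sub-cases $t_1<r$ or $t_2<r$ is unnecessary, since those lemmas already assume only $t_1,t_2\ge 1$).
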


\begin{proof}
    By Hochster's formula (\Cref{prop1})
    {\footnotesize
    \[
\beta_{i,i+d}(R_{G_1*G_2}/I_r(G_1*G_2))= \sum_{\underset{|W|=i+d}{W \subseteq V(G_1*G_2) }} \dim_{\mathbb{K}} \widetilde{H}_{d-1}(\Gamma[W]; \mathbb{K}).
\]
}
\noindent
 Now $V(G_1*G_2)=V(G_1)\sqcup V(G_2)$. Thus if $W\subseteq V(G_1)\sqcup V(G_2)$ with $|W|=i+d$, then $W=W_1\sqcup W_2$, where $W_1=W\cap V(G_1)$ and $W_2=W\cap V(G_2)$. Therefore, $0\le |W_1|\le i+d$ and $0\le |W_2|\le i+d$. Let $|W_1|=i+d$. Then $|W_2|=0$ and $W=W_1$. In that case using \Cref{subcomplex on subset}, we have $\Gamma[W]=\Gamma_1[W]=\Gamma_{11}[W_1]$. Hence
 {\footnotesize
 \begin{equation}\label{eqnew1}
 \sum_{\underset{|W_1|=i+d}{W=W_1\sqcup W_2 \subseteq V(G_1*G_2) }} \dim_{\mathbb{K}} \widetilde{H}_{d-1}(\Gamma[W]; \mathbb{K})=\sum_{\underset{|W_1|=i+d}{W_1 \subseteq V(G_1) }} \dim_{\mathbb{K}} \widetilde{H}_{d-1}(\Gamma_{11}[W_1]; \mathbb{K})=\beta_{i,i+d}(R_{G_1}/I_r(G_1)).
  \end{equation}}
 \noindent
 Similarly, taking $|W_2|=i+d$, we have 
 {\footnotesize
 \begin{equation}\label{eqnew2}
  \sum_{\underset{|W_2|=i+d}{W=W_1\sqcup W_2 \subseteq V(G_1*G_2) }} \dim_{\mathbb{K}} \widetilde{H}_{d-1}(\Gamma[W]; \mathbb{K})=\beta_{i,i+d}(R_{G_2}/I_r(G_2)).
 \end{equation}}
 \noindent
Thus we may assume that $|W_1|=i+d-j$ and $|W_2|=j$, where $1\le j\le i+d-1$. First, we consider the case $d=r$. In this case by \Cref{r-1 homology lemma},

{\footnotesize
    \begin{equation}\label{eqnew3}
    \begin{split}
     &\dim_{\K}(\widetilde H_{r-1}(\Gamma[W];\K))\\
     &=\dim_{\K}(\widetilde H_{r-1}(\Gamma_{11}[W_1];\K))+\dim_{\K}(\widetilde H_{r-1}(\Gamma_{22}[W_2];\K))+{i+r-1\choose r}-{i+r-1-j\choose r}-{j-1\choose r}.   
    \end{split}
    \end{equation}
    }
    Therefore,
    {\footnotesize
    \begin{align*}
       &\beta_{i,i+r}(R_{G_1*G_2}/I_r(G_1*G_2))\\
       &=\sum_{\underset{0\le j\le i+r}{\underset{|W_1|=i+r-j,\,|W_2|=j}{W=W_1\sqcup W_2 \subseteq V(G_1*G_2) }}} \dim_{\mathbb{K}} \widetilde{H}_{r-1}(\Gamma[W]; \mathbb{K})\\
       &=\sum_{\underset{|W_1|=i+r}{W_1 \subseteq V(G_1) }} \dim_{\mathbb{K}} \widetilde{H}_{r-1}(\Gamma_{11}[W_1]; \mathbb{K})+\sum_{\underset{|W_2|=i+r}{W_2 \subseteq V(G_2) }} \dim_{\mathbb{K}} \widetilde{H}_{r-1}(\Gamma_{22}[W_2]; \mathbb{K})\\
       &\hspace{18em}+\sum_{\underset{1\le j\le i+r-1}{\underset{|W_1|=i+r-j,\,|W_2|=j}{W=W_1\sqcup W_2 \subseteq V(G_1*G_2) }}} \dim_{\mathbb{K}} \widetilde{H}_{r-1}(\Gamma[W]; \mathbb{K})
       \end{align*}}
Now using \Cref{eqnew1}, \Cref{eqnew2} and \Cref{eqnew3} we have,

       {\footnotesize
    \begin{align*}
    &\beta_{i,i+r}(R_{G_1*G_2}/I_r(G_1*G_2))\\
       &=\beta_{i,i+r}(R_{G_1}/I_r(G_1))+\beta_{i,i+r}(R_{G_2}/I_r(G_2))\\
       &\hspace{6em}+\sum_{\underset{1\le j\le i+r-1}{\underset{|W_1|=i+r-j,\,|W_2|=j}{W=W_1\sqcup W_2 \subseteq V(G_1*G_2) }}}\left[\dim_{\K}(\widetilde H_{r-1}(\Gamma_{11}[W_1];\K))+\dim_{\K}(\widetilde H_{r-1}(\Gamma_{22}[W_2];\K))\right]\\
       &\hspace{12em}+\sum_{\underset{1\le j\le i+r-1}{\underset{|W_1|=i+r-j,\,|W_2|=j}{W=W_1\sqcup W_2 \subseteq V(G_1*G_2) }}}\left[{i+r-1\choose r}-{i+r-1-j\choose r}-{j-1\choose r}\right]\\
       &=\beta_{i,i+r}(R_{G_1}/I_r(G_1))+\beta_{i,i+r}(R_{G_2}/I_r(G_2))\\
       &\hspace{4em}+\sum_{j=1}^{i+r-1}\left[ {|V(G_2)|\choose j}\beta_{i-j,i-j+r}(R_{G_1}/I_r(G_1))+{|V(G_1)|\choose i-j+r}\beta_{j-r,j}(R_{G_2}/I_r(G_2)) \right]\\
       &\hspace{8em}+\sum_{j=1}^{i+r-1}\left[{i+r-1\choose r}-{i+r-1-j\choose r}-{j-1\choose r}\right]{|V(G_2)|\choose j}{|V(G_1)|\choose i+r-j}.
    \end{align*}}
    \noindent
    Now $\sum_{j=1}^{i+r-1}{|V(G_1)|\choose i-j+r}\beta_{j-r,j}(R_{G_2}/I_r(G_2))=\sum_{j=1}^{i+r-1}{|V(G_1)|\choose j}\beta_{i-j,i-j+r}(R_{G_2}/I_r(G_2))$. Thus
    {\footnotesize
    \begin{align*}
        &\beta_{i,i+r}(R_{G_1*G_2}/I_r(G_1*G_2))\\
        &=\sum_{j=0}^{i+r-1}\left[ {m\choose j}\beta_{i-j,i-j+r}(R_{G_1}/I_r(G_1))+{n\choose j}\beta_{i-j,i-j+r}(R_{G_2}/I_r(G_2)) \right]\\
       &\hspace{6em}+\sum_{j=1}^{i+r-1}\left[{i+r-1\choose r}-{i+r-1-j\choose r}-{j-1\choose r}\right]{m\choose j}{n\choose i-j+r}.
    \end{align*}}
    \noindent
    We now consider the case $d\ge r+1$. In this case by \Cref{i homology lemma},
    {\footnotesize
    \begin{align*}
     \dim_{\K}(\widetilde H_{d-1}(\Gamma[W];\K))=\dim_{\K}(\widetilde H_{d-1}(\Gamma_{11}[W_1];\K))+\dim_{\K}(\widetilde H_{d-1}(\Gamma_{22}[W_2];\K)).
    \end{align*}}
\noindent
    Therefore,
    {\footnotesize
    \begin{align*}
       &\beta_{i,i+d}(R_{G_1*G_2}/I_r(G_1*G_2))\\
       &=\sum_{\underset{0\le j\le i+d}{\underset{|W_1|=i+d-j,\,|W_2|=j}{W=W_1\sqcup W_2 \subseteq V(G_1*G_2) }}} \dim_{\mathbb{K}} \widetilde{H}_{d-1}(\Gamma[W]; \mathbb{K})\\
       &=\sum_{\underset{|W_1|=i+d}{W_1 \subseteq V(G_1) }} \dim_{\mathbb{K}} \widetilde{H}_{d-1}(\Gamma_{11}[W_1]; \mathbb{K})+\sum_{\underset{|W_2|=i+d}{W_2 \subseteq V(G_2) }} \dim_{\mathbb{K}} \widetilde{H}_{d-1}(\Gamma_{22}[W_2]; \mathbb{K})\\
       &\hspace{18em}+\sum_{\underset{1\le j\le i+d-1}{\underset{|W_1|=i+d-j,\,|W_2|=j}{W=W_1\sqcup W_2 \subseteq V(G_1*G_2) }}} \dim_{\mathbb{K}} \widetilde{H}_{d-1}(\Gamma[W]; \mathbb{K})
       \end{align*}
       }

\noindent
Note that $\sum_{\underset{|W_1|=i+d}{W_1 \subseteq V(G_1) }} \dim_{\mathbb{K}} \widetilde{H}_{d-1}(\Gamma_{11}[W_1]; \mathbb{K})=\beta_{i,i+d}(R_{G_1}/I_r(G_1))$ and similarly we have $\sum_{\underset{|W_2|=i+d}{W_2 \subseteq V(G_2) }} \dim_{\mathbb{K}} \widetilde{H}_{d-1}(\Gamma_{22}[W_2]; \mathbb{K})=\beta_{i,i+d}(R_{G_2}/I_r(G_2))$. Hence

       {\footnotesize
       \begin{align*}
       &\beta_{i,i+d}(R_{G_1*G_2}/I_r(G_1*G_2))\\
       &=\beta_{i,i+d}(R_{G_1}/I_r(G_1))+\beta_{i,i+d}(R_{G_2}/I_r(G_2))\\
       &\hspace{6em}+\sum_{\underset{1\le j\le i+d-1}{\underset{|W_1|=i+d-j,\,|W_2|=j}{W=W_1\sqcup W_2 \subseteq V(G_1*G_2) }}}\left[\dim_{\K}(\widetilde H_{d-1}(\Gamma_{11}[W_1];\K))+\dim_{\K}(\widetilde H_{d-1}(\Gamma_{22}[W_2];\K))\right]\\
       &=\beta_{i,i+d}(R_{G_1}/I_r(G_1))+\beta_{i,i+d}(R_{G_2}/I_r(G_2))\\
       &\hspace{6em}+\sum_{j=1}^{i+d-1}\left[ {|V(G_2)|\choose j}\beta_{i-j,i-j+d}(R_{G_1}/I_r(G_1))+{|V(G_1)|\choose i-j+d}\beta_{j-d,j}(R_{G_2}/I_r(G_2)) \right]\\
       &=\sum_{j=0}^{i+d-1}\left[{m\choose j}\beta_{i-j,i-j+d}(R_{G_1}/I_r(G_1))+{n\choose j}\beta_{i-j,i-j+d}(R_{G_2}/I_r(G_2)) \right].
       \end{align*}}

       This completes the proof.
\end{proof}

\begin{remark}
    The formula for the Betti numbers in \Cref{main theorem} can also be deduced using \cite[Corollary 5.3]{Hugh}. Indeed, if one takes $I=\langle x_1,\ldots,x_n\rangle$ and $J=\langle y_1,\ldots,y_m\rangle$, then the formula in \Cref{main theorem} follows after simplifying the formula in \cite[Corollary 5.3]{Hugh}.
\end{remark}

As a corollary of the above theorem, we can prove the following result on the regularity of $R_{G_1*G_2}/I_r(G_1*G_2)$.

\begin{corollary}\label{main corollary}
    Let $G_1$ and $G_2$ be two graphs on disjoint vertex sets and let $r$ be a positive integer. Then 
    \[
    \mathrm{reg}(R_{G_1*G_2}/I_r(G_1*G_2))=\max\left\{\mathrm{reg}(R_{G_1}/I_r(G_1)),\mathrm{reg}(R_{G_2}/I_r(G_2))\right\}
    \]
\end{corollary}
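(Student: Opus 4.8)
The plan is to read both sides of the asserted equality directly off the formula in \Cref{main theorem}, using the description $\mathrm{reg}(R/I)=\max\{d\mid \beta_{i,i+d}(R/I)\neq 0\text{ for some }i\ge 0\}$ together with two elementary inputs. First, since $I_r(G_1*G_2)$ is equigenerated in degree $r+1$, the only possibly nonzero graded Betti numbers of $R_{G_1*G_2}/I_r(G_1*G_2)$ are $\beta_{0,0}$ and those of the form $\beta_{i,i+d}$ with $d\ge r$ (cf.\ \cite[Proposition 12.3]{IP}), so only the range $d\ge r$ matters for the regularity. Second, every summand occurring in \Cref{main theorem} is non-negative: for the binomial factors this is clear, and for the bracketed coefficient $\binom{i+d-1}{d}-\binom{i+d-1-j}{d}-\binom{j-1}{d}$ in the $d=r$ row it follows either from its interpretation in \Cref{eqnew3} as a sum of dimensions of reduced homology groups, or directly from the Vandermonde estimate $\binom{p+q+1}{r}\ge\binom{p}{r}+\binom{q}{r}$. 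Write $\rho_s=\mathrm{reg}(R_{G_s}/I_r(G_s))$ for $s\in\{1,2\}$ and $\rho=\max\{\rho_1,\rho_2\}$.

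I would first establish $\mathrm{reg}(R_{G_1*G_2}/I_r(G_1*G_2))\le\rho$. Assume $\beta_{i,i+d}(R_{G_1*G_2}/I_r(G_1*G_2))\neq 0$ with $d\ge r$; the goal is $d\le\rho$. If $d\ge r+1$, then by \Cref{main theorem} at least one of the summands $\binom{m}{j}\beta_{i-j,i-j+d}(R_{G_1}/I_r(G_1))$, $\binom{n}{j}\beta_{i-j,i-j+d}(R_{G_2}/I_r(G_2))$ is nonzero, so $\beta_{i-j,i-j+d}(R_{G_s}/I_r(G_s))\neq 0$ for some $s$; as the internal degree $d$ is preserved this gives $d\le\rho_s\le\rho$. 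If $d=r$ it suffices to see that $\rho\ge r$: a nonzero contribution to the double sum again produces $\beta_{\cdot,\cdot+r}(R_{G_s}/I_r(G_s))\neq 0$ and hence $\rho\ge r$; a nonzero contribution to the combinatorial sum forces $I_r(G_1*G_2)\neq 0$, and then (under the mild hypothesis that at least one $I_r(G_s)$ is itself nonzero, which holds automatically for all the families treated in the applications) $\rho_s\ge r$ since $I_r(G_s)$ is generated in degree $r+1$. In every case $d\le\rho$.

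For the reverse inequality $\mathrm{reg}(R_{G_1*G_2}/I_r(G_1*G_2))\ge\rho$, choose $s$ with $\rho_s=\rho$ and $i$ with $\beta_{i,i+\rho}(R_{G_s}/I_r(G_s))\neq 0$. If $\rho\ge r+1$, the $j=0$ summand in the $d\ge r+1$ case of \Cref{main theorem} yields $\beta_{i,i+\rho}(R_{G_1*G_2}/I_r(G_1*G_2))\ge\beta_{i,i+\rho}(R_{G_s}/I_r(G_s))>0$, whence $\mathrm{reg}\ge\rho$. If $\rho=r$, then $I_r(G_s)\neq 0$, so $I_r(G_1*G_2)\supseteq I_r(G_s)$ — an $(r+1)$-subset of $V(G_s)$ inducing a connected subgraph of $G_s$ induces a connected subgraph of $G_1*G_2$ as well — is a nonzero ideal generated in degree $r+1$, so its quotient has regularity at least $r=\rho$; and if $\rho=0$ there is nothing to prove. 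Combining the two bounds gives the equality.

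I expect the delicate point to be the threshold $d=r$: the purely combinatorial term in \Cref{main theorem} can make $\beta_{i,i+r}(R_{G_1*G_2}/I_r(G_1*G_2))$ nonzero on its own, and one must confirm that this never raises the regularity of the join strictly above $\max\{\rho_1,\rho_2\}$, i.e.\ that whenever that term contributes one already has $\rho\ge r$. The containments $I_r(G_s)\subseteq I_r(G_1*G_2)$ together with the equigeneration in degree $r+1$ are precisely what settles this, modulo the innocuous non-degeneracy noted above.
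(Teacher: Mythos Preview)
Your approach is the paper's: both arguments read the two inequalities directly off \Cref{main theorem}, using that every summand there is non-negative (the paper verifies $\binom{i+d-1}{d}\ge\binom{i+d-1-j}{d}+\binom{j-1}{d}$ by induction on $d$, you invoke \Cref{eqnew3} or Vandermonde). Your case split is more elaborate than the paper's one-paragraph version, but the content is identical.

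Your hedge about the ``mild hypothesis that at least one $I_r(G_s)$ is itself nonzero'' is not paranoia but a genuine correction. If both $I_r(G_1)$ and $I_r(G_2)$ vanish while $|V(G_1)|+|V(G_2)|\ge r+1$ (for instance $G_1$ and $G_2$ each a single vertex, $r=1$, so $G_1*G_2=K_2$), then $\rho_1=\rho_2=0$, yet the combinatorial sum in the $d=r$ row of \Cref{main theorem} gives $\beta_{1,1+r}(R_{G_1*G_2}/I_r(G_1*G_2))>0$, so the regularity of the join is $r\neq 0$. The paper's proof silently assumes $s\ge r$ when it asserts that the formula forces $\beta_{i,i+j}(R_{G_1*G_2}/I_r(G_1*G_2))=0$ for all $j\ge s+1$; you are right that this step needs the extra hypothesis, and the statement as written fails in that degenerate regime.
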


\begin{proof}
    Without loss of generality let $s=\mathrm{reg}(R_{G_1}/I_r(G_1))\ge \mathrm{reg}(R_{G_2}/I_r(G_2))$. Then we have $\beta_{i,i+s}(R_{G_1}/I_r(G_1))\neq 0$ for some $i$ and $\beta_{i,i+j}(R_{G_1}/I_r(G_1))=\beta_{i,i+j}(R_{G_2}/I_r(G_2))=0$ for all $i$ and for all $j\ge s+1$. Now we use the formula in \Cref{main theorem}. Note that, by induction on $d$, one can show that ${i+d-1\choose d}\ge {i+d-1-j\choose d}+{j-1\choose d}$. Hence $\beta_{i,i+s}(R_{G_1*G_2}/I_r(G_1*G_2))\neq 0$ for the above $i$ and $\beta_{i,i+j}(R_{G_1*G_2}/I_r(G_1*G_2))$ for all $i$ and for all $j\ge s+1$. Therefore, $\mathrm{reg}(R_{G_1*G_2}/I_r(G_1*G_2))=s=\max\left\{\mathrm{reg}(R_{G_1}/I_r(G_1)),\mathrm{reg}(R_{G_2}/I_r(G_2))\right\}$.
\end{proof}


\section{Betti numbers for some families of graphs}\label{Section 4}

In this section, we find explicit formulas for the Betti numbers and the regularity of the hyperedge ideals for some well-known families of graphs as an application of \Cref{main theorem}. First, we give the formula for complete graphs. Here we note that the formula of Betti numbers for complete graphs was first obtained in \cite[Theorem 4.4]{DRST} by showing that the ideal $I_r(K_n)$ is a vertex splittable ideal for all positive integer $r$. Moreover, the formula is well-known in the case of $r=1$ and it was first derived in the thesis of Jacques \cite[Theorem 5.1.1]{SJ}.

\begin{theorem}\label{complete graph}
    Let $K_n$ denote the complete graph on $n$ vertices. Then for all positive integers $r
    $ the $\N$-graded Betti numbers of the ideal $I_r(K_n)$ can be expressed as follows.
    {\footnotesize
    \[
\beta_{i,i+d}(R_{K_n}/I_r(K_n))=\begin{cases}
    {i+d-1\choose d}{n\choose i+d} &\text{ for } d=r \text{ and } 1\le i\le n-r,\\
    0 &\text{ otherwise.}
\end{cases}
\]  }
\end{theorem}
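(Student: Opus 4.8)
The plan is to derive the formula for $\beta_{i,i+d}(R_{K_n}/I_r(K_n))$ by a direct induction on $n$ using the join decomposition and \Cref{main theorem}. The key observation is that $K_n = K_{n-1} * K_1$, where $K_1$ is a single vertex (with $I_r(K_1)$ the zero ideal in a one-variable ring), so the Betti numbers of $R_{K_n}/I_r(K_n)$ are governed by those of $R_{K_{n-1}}/I_r(K_{n-1})$ together with the purely combinatorial correction term appearing in \Cref{main theorem} for $d=r$. First I would set up the base case: for $n \le r$ the hypergraph $\mathrm{Con}_r(K_n)$ has no edges, so $I_r(K_n)=0$ and $R_{K_n}/I_r(K_n)$ is the polynomial ring, giving $\beta_{i,i+d}=0$ for all $i\ge 1$, consistent with the claimed formula since the range $1\le i\le n-r$ is empty. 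For $n=r+1$ the ideal $I_r(K_{r+1})$ is principal, generated by one squarefree monomial of degree $r+1$, so $\beta_{1,r+1}=1={r\choose r}{r+1\choose r+1}$ and all other Betti numbers vanish, again matching.

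For the inductive step, I would apply \Cref{main theorem} with $G_1 = K_{n-1}$ (on $n-1$ vertices) and $G_2 = K_1$ (on $m=1$ vertex). Since $I_r(K_1)=0$, the only nonzero Betti number of $R_{K_1}/I_r(K_1)$ is $\beta_{0,0}=1$, so in the sums over $j$ the $G_2$-contribution collapses: the term ${n-1\choose j}\beta_{i-j,i-j+d}(R_{K_1}/I_r(K_1))$ survives only when $j=i$ and $d=0$, which is outside the relevant range $d\ge r\ge 1$; hence that whole family of terms vanishes. The $G_1$-contribution $\sum_j {1\choose j}\beta_{i-j,\,i-j+d}(R_{K_{n-1}}/I_r(K_{n-1}))$ reduces to just $j=0$ and $j=1$, i.e. $\beta_{i,i+d}(R_{K_{n-1}}/I_r(K_{n-1})) + \beta_{i-1,\,i-1+d}(R_{K_{n-1}}/I_r(K_{n-1}))$. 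For $d\ge r+1$ this already shows $\beta_{i,i+d}(R_{K_n}/I_r(K_n))$ is a sum of Betti numbers which by induction all vanish, so the "otherwise" case is handled. For $d=r$, the extra correction term in \Cref{main theorem} with $m=1$ forces $j=1$, leaving
{\footnotesize
\[
\left[{i+r-1\choose r}-{i+r-2\choose r}-{0\choose r}\right]{1\choose 1}{n-1\choose i+r-1}
= {i+r-2\choose r-1}{n-1\choose i+r-1},
\]}
using Pascal's rule and $\binom{0}{r}=0$ for $r\ge 1$. So the recursion becomes
{\footnotesize
\[
\beta_{i,i+r}(R_{K_n}/I_r(K_n)) = {i+r-1\choose r}{n-1\choose i+r} + {i+r-2\choose r}{n-1\choose i+r-1} + {i+r-2\choose r-1}{n-1\choose i+r-1}
\]}
by the induction hypothesis applied to the first two summands. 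Combining the last two binomials via Pascal's rule gives ${i+r-1\choose r}{n-1\choose i+r-1}$, and then ${i+r-1\choose r}\left[{n-1\choose i+r}+{n-1\choose i+r-1}\right] = {i+r-1\choose r}{n\choose i+r}$, which is exactly the claimed formula. I would also check the boundary of the range: if $i+r > n$ both binomials $\binom{n-1}{i+r}$, $\binom{n-1}{i+r-1}$ and $\binom{n}{i+r}$ behave correctly so that the formula still reads $0$ outside $1\le i\le n-r$.

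The step I expect to require the most care is not any single computation but making sure the reductions of the two sums in \Cref{main theorem} are done cleanly when one of the graphs has an empty hyperedge ideal — in particular verifying that no spurious term from $\beta_{0,0}(R_{K_1}/I_r(K_1))=1$ sneaks into the range $d\ge r$, and tracking the index shifts in $\beta_{i-j,\,i-j+d}$ and $\beta_{j-r,\,j}$ correctly. An alternative, cleaner route would bypass the recursion entirely: observe that $\mathrm{Ind}_r(K_n)$ is precisely the $(r-1)$-skeleton of the $(n-1)$-simplex on $n$ vertices, whose Stanley–Reisner ideal is generated by all squarefree monomials of degree $r+1$; the minimal free resolution of such a "squarefree Veronese"-type ideal is the (pure) Eagon–Northcott-type / Eliahou–Kervaire resolution, whose Betti numbers are classically ${i+r-1\choose r}{n\choose i+r}$ in homological degree $i$. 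I would present the join-based induction as the main proof since it showcases \Cref{main theorem}, and mention the skeleton description as a remark.
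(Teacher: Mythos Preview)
Your proposal is correct and follows essentially the same route as the paper: induction on $n$ via the join decomposition $K_n = K_{n-1} * K_1$, applying \Cref{main theorem}, and collapsing the sums using $m=1$ together with Pascal's rule to close the recursion. The paper's proof is terser (it jumps directly to the final binomial identity where you spell out the two Pascal steps), and your closing remark about the $(r-1)$-skeleton / Eagon--Northcott description is a nice addition not present in the paper.
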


\begin{proof}
    We prove the formula by induction on $n$. Let $r$ be fixed. If $n\le r$ then $I_r(K_n)=0$. If $n=r+1$, then $I_r(K_n)=\langle \prod_{i=1}^{r+1}x_i \rangle$. Hence 
     {\footnotesize
     \[
    0\rightarrow R_{K_n}(-r-1)\xrightarrow{\cdot \prod_{i=1}^{r+1}x_i}R_{K_n}\rightarrow \frac{R_{K_n}}{\left\langle \prod_{i=1}^{r+1}x_i\right\rangle}\rightarrow 0
    \]
    }
    
    \noindent
    is a minimal free resolution of $R_{K_n}/I_r(K_n)$. Therefore, we have the above formula. Now let $n>r+1$. The complete graph $K_n$ can be expressed as $K_{n-1}*G$, where $G$ is the graph consisting of only one vertex. Then by induction we see that $\beta_{i,i+j}(R_{K_n}/I_r(K_n))=0$ if $j\neq r$. Moreover, by \Cref{main theorem}, we have
    {\footnotesize
    \begin{align*}
        &\beta_{i,i+r}(R_{K_n}/I_r(K_n))\\
        &=\sum_{j=0}^{i+r-1}\left\{{1\choose j}\beta_{i-j,i-j+r}(R_{K_{n-1}}/I_r(K_{n-1}))+{n-1\choose j}\beta_{i-j,i-j+r}(R_{G}/I_r(G)) \right\}+\\
&\hspace{7em}   \sum_{j=1}^{i+r-1} \left[{i+r-1\choose r}-{i+r-1-j\choose r}-{j-1\choose r}\right]{1\choose j}{n-1\choose i+r-j}\\
&=\beta_{i,i+r}(R_{K_{n-1}}/I_r(K_{n-1}))+\beta_{i-1,i+r-1}(R_{K_{n-1}}/I_r(K_{n-1}))+\\
&\hspace{7em} \left[{i+r-1\choose r}-{i+r-2\choose r}-{0\choose r}\right]{n-1\choose i+r-1}\\
&={i+r-1\choose r}{n\choose i+r}.
    \end{align*}}
    
\end{proof}

Since $\beta_{i,i+d}(R_{K_n}/I_r(K_n))= 0$ for $d\neq r$, we have the following corollary.
\begin{corollary}\label{corollary 1}
    For all positive integer $r$, $\mathrm{reg}(R_{K_n}/I_r(K_n))=r$.
\end{corollary}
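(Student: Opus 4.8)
The plan is to read the regularity straight off the Betti table computed in \Cref{complete graph}. Recall that $\mathrm{reg}(R_{K_n}/I_r(K_n)) = \max\{\, j - i \mid \beta_{i,j}(R_{K_n}/I_r(K_n)) \neq 0 \,\}$, so the whole task reduces to identifying which pairs $(i,j)$ support a nonzero graded Betti number and then maximizing $j-i$ over them.

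First I would invoke the vanishing half of \Cref{complete graph}: $\beta_{i,i+d}(R_{K_n}/I_r(K_n)) = 0$ for every $d \neq r$. Hence every nonzero graded Betti number of $R_{K_n}/I_r(K_n)$ sits at a pair of the form $(i, i+r)$, for which $j - i = r$. This already yields $\mathrm{reg}(R_{K_n}/I_r(K_n)) \le r$, and it shows the regularity equals $r$ the moment some such Betti number is nonzero. For the latter I would exhibit one explicit nonzero entry with $j - i = r$: assuming $n \ge r+1$ (so that $I_r(K_n)$ is a nonzero ideal), the nonvanishing half of \Cref{complete graph} with $i = 1$ gives $\beta_{1,\,1+r}(R_{K_n}/I_r(K_n)) = \binom{r}{r}\binom{n}{r+1} = \binom{n}{r+1} > 0$; concretely this just records that $I_r(K_n)$ is minimally generated by the $\binom{n}{r+1}$ squarefree monomials of degree $r+1$, since in a complete graph every $(r+1)$-subset of vertices induces a connected subgraph. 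Combining the two halves gives $\mathrm{reg}(R_{K_n}/I_r(K_n)) = r$.

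There is no genuine obstacle in this argument — it is a direct corollary of \Cref{complete graph}; the only point worth flagging is the degenerate range $n \le r$, where $\C$ has no hyperedges, $I_r(K_n) = 0$, and $R_{K_n}/I_r(K_n)$ is free of regularity $0$. The statement is therefore to be read under the standing assumption $n \ge r+1$, which is exactly the range in which the index condition $1 \le i \le n-r$ of \Cref{complete graph} is nonvacuous.
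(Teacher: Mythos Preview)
Your argument is correct and matches the paper's approach exactly: the corollary is deduced immediately from the Betti number formula in \Cref{complete graph}, since all nonzero Betti numbers lie on the strand $d=r$. Your write-up is in fact slightly more careful than the paper's one-line justification, as you explicitly verify nonvanishing at $i=1$ and flag the degenerate case $n\le r$.
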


\begin{example}
   Let $G=K_6$ and $r=3$. Then the Betti table of $R_{K_6}/I_3(K_6)$ computed using \Cref{complete graph} is given as follows.
    \begin{figure}[h!]\
\centering
\begin{tikzpicture}
[scale=.45]
\draw [fill] (4,2) circle [radius=0.1];
\draw [fill] (7,2) circle [radius=0.1];
\draw [fill] (9,4) circle [radius=0.1];
\draw [fill] (9,6) circle [radius=0.1];
\draw [fill] (2,4) circle [radius=0.1];
\draw [fill] (2,6) circle [radius=0.1];
\node at (4,1.2){$x_1$};
\node at (7,1.2){$x_2$};
\node at (10,4){$x_3$};
\node at (10,6){$x_4$};
\node at (1,6){$x_6$};
\node at (1,4){$x_7$};
\draw (4,2)--(7,2)--(9,4)--(2,4)--(2,6)--(4,2)--(9,4)--(2,4)--(7,2)--(9,6)--(2,6)--(4,2)--(9,6)--(2,4)--(4,2)--(2,4)--(7,2)--(2,6)--(9,4)--(9,6)--(9,4);

\draw [fill] (16,7) circle [radius=0.01];
\draw [fill] (16,8.5) circle [radius=0.01];
\draw (16,7)--(23.7,7);
\draw (16,8.5)--(23.7,8.5);
\node at (18,7.8){$0$};
\node at (19.4,7.8){$1$};
\node at (21.2,7.8){$2$};
\node at (23.3,7.8){$3$};
\node at (16.5,6.1){$0:$};
\node at (16.5,4.4){$1:$};
\node at (16.5,2.6){$2:$};
\node at (18,6){$1$};
\node at (19.4,6){$\cdot$};
\node at (21.2,6){$\cdot$};
\node at (23.3,6){$\cdot$};
\node at (18,4.3){$\cdot$};
\node at (19.4,4.3){$\cdot$};
\node at (21.2,4.3){$\cdot$};
\node at (23.3,4.3){$\cdot$};
\node at (18,2.5){$\cdot$};
\node at (19.4,2.5){$\cdot$};
\node at (21.2,2.5){$\cdot$};
\node at (23.3,2.5){$\cdot$};
\node at (16.5,0.8){$3:$};
\node at (18,0.7){$\cdot$};
\node at (19.4,0.7){$15$};
\node at (21.2,0.7){$24$};
\node at (23.3,0.7){$10$};
\end{tikzpicture}
\caption{ The graph $K_6$ and the Betti table of $R_{K_6}/I_3(K_6)$}
\end{figure}
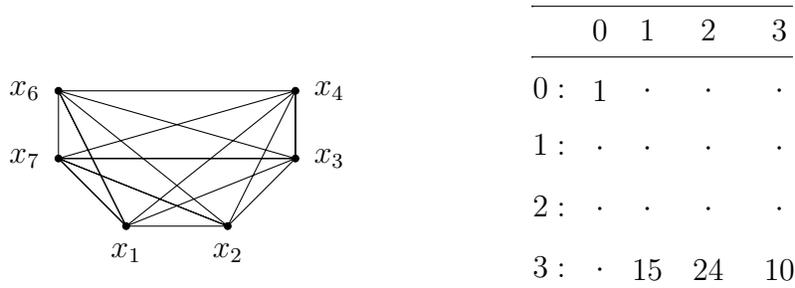
 \end{example}

The complement of a multipartite graph is a chordal graph. Hence the edge ideal of $\C$ when $G$ is a multipartite graph, is a vertex splittable ideal (by \cite[Theorem 3.12]{DRST}). In the next theorem, we provide explicit formulas for all the Betti numbers for the complete multipartite graphs. The formula is well-known in the case of $r=1$ and it was first derived in the thesis of Jacques \cite[Theorem 5.3.8]{SJ}.

\begin{theorem}\label{bipartite}
    Let $K_{n_1,n_2,\ldots,n_t}$ denote the complete multipartite graph on $\sum_{s=1}^tn_s$ number of vertices. Then for all positive integer $r$ the $\N$-graded Betti numbers of the ideal $I_r(K_{n_1,n_2,\ldots,n_t})$ can be expressed as follows.
    {\footnotesize
    \[
\beta_{i,i+d}(R_{K_{n_1,n_2,\ldots,n_t}}/I_r(K_{n_1,n_2,\ldots,n_t}))=\begin{cases}
    {\sum_{s=1}^tn_s\choose i+r}{i+r-1\choose r}-\sum_{\underset{\sum_{s=1}^tj_s=i+r}{(j_1,j_2,\ldots,j_t)\in\mathbb N^t}} \left[\prod_{s=1}^t{n_s\choose j_s}\right]\sum_{s=1}^t{j_s-1\choose r}&\text{ for }d=r,\\
    0 &\text{ otherwise.}
\end{cases}
\]  }

\end{theorem}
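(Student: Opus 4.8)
The plan is to induct on the number of parts $t$, with \Cref{main theorem} supplying the inductive step. The structural observation is that $K_{n_1,\ldots,n_t}$ is the join $K_{n_1,\ldots,n_{t-1}}*\overline{K_{n_t}}$, where $\overline{K_{n_t}}$ denotes the edgeless graph on $n_t$ vertices. Since $\overline{K_{n_t}}$ has no edges, no subset of size $r+1\ge 2$ induces a connected subgraph, so $\mathrm{Con}_r(\overline{K_{n_t}})$ has no hyperedges and $I_r(\overline{K_{n_t}})=0$; consequently $\beta_{0,0}(R_{\overline{K_{n_t}}}/I_r(\overline{K_{n_t}}))=1$ and all other graded Betti numbers of $R_{\overline{K_{n_t}}}/I_r(\overline{K_{n_t}})$ vanish. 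The base case $t=1$ is then immediate, since $K_{n_1}=\overline{K_{n_1}}$ and $I_r=0$: the claimed formula evaluates to $0$ because the only lattice point with $\sum_s j_s=i+r$ is $j_1=i+r$, giving $\binom{n_1}{i+r}\binom{i+r-1}{r}-\binom{n_1}{i+r}\binom{i+r-1}{r}=0$.

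For the inductive step I apply \Cref{main theorem} with $G_1=K_{n_1,\ldots,n_{t-1}}$ on $N:=\sum_{s=1}^{t-1}n_s$ vertices and $G_2=\overline{K_{n_t}}$ on $n_t$ vertices. Because $I_r(G_2)=0$ and $r\ge 1$, every term $\beta_{i-j,\,i-j+d}(R_{G_2}/I_r(G_2))$ with $d\ge r$ appearing in \Cref{main theorem} is zero. Together with the inductive hypothesis $\beta_{k,k+d}(R_{G_1}/I_r(G_1))=0$ for $d\ne r$, the $d\ge r+1$ branch of \Cref{main theorem} then gives $\beta_{i,i+d}(R_{K_{n_1,\ldots,n_t}}/I_r)=0$; vanishing for $d<r$ is automatic since $I_r$ is generated in degree $r+1$. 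So only the case $d=r$ is at issue, where \Cref{main theorem} reads
\[
\beta_{i,i+r}(R_{K_{n_1,\ldots,n_t}}/I_r)=\sum_{j=0}^{i+r-1}\binom{n_t}{j}\beta_{i-j,\,i-j+r}(R_{G_1}/I_r(G_1))+\sum_{j=1}^{i+r-1}\left[\binom{i+r-1}{r}-\binom{i+r-1-j}{r}-\binom{j-1}{r}\right]\binom{n_t}{j}\binom{N}{i+r-j};
\]
into this I substitute the inductive formula for $\beta_{i-j,\,i-j+r}(R_{G_1}/I_r(G_1))$, namely $\binom{N}{i-j+r}\binom{i-j+r-1}{r}$ minus the correction sum over the tuples in $\mathbb N^{t-1}$.

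The remainder is an elementary binomial identity. In the target formula for $t$ parts, split the inner sum $\sum_{s=1}^{t}\binom{j_s-1}{r}$ into $\sum_{s=1}^{t-1}\binom{j_s-1}{r}$ plus $\binom{j_t-1}{r}$; after applying the Vandermonde identity $\sum_{(j_1,\ldots,j_{t-1})}\prod_{s=1}^{t-1}\binom{n_s}{j_s}=\binom{N}{i+r-j_t}$ on the first $t-1$ coordinates, the $\sum_{s=1}^{t-1}$ portion becomes exactly $\sum_j\binom{n_t}{j}$ times the correction sum in the inductive formula, hence cancels the corresponding contribution of the first sum above, while the $\binom{j_t-1}{r}$ portion cancels the $\binom{j-1}{r}$ term in the bracket of the second sum. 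Extending all summation ranges to $0\le j\le i+r$ is legitimate under the convention $\binom{a}{b}=0$ for $a<b$ (so $\binom{-1}{r}=0$ as $r\ge 1$), which also makes the boundary terms $j=0$ and $j=i+r$ of the bracket vanish; and since $\binom{i-j+r-1}{r}=\binom{i+r-1-j}{r}$, the remaining pieces collapse to $\binom{i+r-1}{r}\sum_{j}\binom{n_t}{j}\binom{N}{i+r-j}=\binom{i+r-1}{r}\binom{N+n_t}{i+r}$, which is precisely the leading term of the target formula. The main obstacle is therefore not conceptual but organizational: keeping track of the summation ranges, the boundary contributions at $j=0$ and $j=i+r$, and the binomial-coefficient convention so that all of these cancellations are valid as written.
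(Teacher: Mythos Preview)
Your proposal is correct and follows essentially the same approach as the paper: induct on $t$, write $K_{n_1,\ldots,n_t}$ as the join of $K_{n_1,\ldots,n_{t-1}}$ with an edgeless graph, apply \Cref{main theorem}, and simplify via Vandermonde. The only difference is that the paper takes $t=2$ as its base case and works it out explicitly, whereas you start at $t=1$ (the edgeless graph, where both sides vanish trivially); your choice is slightly cleaner and avoids duplicating the $t=2$ computation.
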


\begin{proof}
    We prove this formula by induction on $t$. First, consider the case when $t=2$. Let $G_1$ denote the graph on $n_1$ vertices with no edges and let $G_2$ denote the graph on $n_2$ vertices with no edges. Then $K_{n_1,n_2}=G_1*G_2$. By the formula in \Cref{main theorem} we see that $\beta_{i,i+d}(R_{K_{n_1,n_2}}/I_r(K_{n_1,n_2}))=0$ if $d\neq r$. Moreover,
{\footnotesize
\begin{align*}
\beta_{i,i+r}(R_{K_{n_1,n_2}}/I_r(K_{n_1,n_2}))&=\sum_{j=0}^{i+r-1}\left\{{n_2\choose j}\beta_{i-j,i-j+r}(R_{G_1}/I_r(G_1))+{n_1\choose j}\beta_{i-j,i-j+r}(R_{G_2}/I_r(G_2)) \right\}+\\
&\hspace{5em}   \sum_{j=1}^{i+r-1} \left[{i+r-1\choose r}-{i+r-1-j\choose r}-{j-1\choose r}\right]{n_2\choose j}{n_1\choose i+r-j}\\ 
&=\sum_{j=1}^{i+r-1} \left[{i+r-1\choose r}-{i+r-1-j\choose r}-{j-1\choose r}\right]{n_2\choose j}{n_1\choose i+r-j}\\
&=\sum_{j=1}^{i+r-1}{i+r-1\choose r}{n_2\choose j}{n_1\choose i+r-j}-\sum_{j=1}^{i-1}{i+r-1-j\choose r}{n_2\choose j}{n_1\choose i+r-j}-\\
&\hspace{23em}\sum_{j=r+1}^{i+r-1}{j-1\choose r}{n_2\choose j}{n_1\choose i+r-j}
\end{align*}
}
Now using the formula $\sum_{j=0}^t{a\choose j}{b\choose t-j}={a+b\choose t}$ we get the following.

{\footnotesize
\begin{align*}
    &\beta_{i,i+r}(R_{K_{n_1,n_2}}/I_r(K_{n_1,n_2}))\\
    &={n_1+n_2\choose i+r}{i+r-1\choose r}-\sum_{\underset{j_1+j_2=i+r}{(j_1,j_2)\in\mathbb N^2}}{n_1\choose j_1}{n_2\choose j_2}{j_1-1\choose r}-\sum_{\underset{j_1+j_2=i+r}{(j_1,j_2)\in\mathbb N^2}}{n_1\choose j_1}{n_2\choose j_2}{j_2-1\choose r}\\
    &={n_1+n_2\choose i+r}{i+r-1\choose r}-\sum_{\underset{j_1+j_2=i+r}{(j_1,j_2)\in\mathbb N^2}}\left[ 
\prod_{s=1}^2{n_s\choose j_s} \right]\sum_{s=1}^2{j_s-1\choose r}.
\end{align*}
}

Now suppose $t>2$ and the formula is true for all positive integers less than $t$. Let $G_1'=K_{n_1,n_2,\ldots,n_{t-1}}$ and let $G_2'$ denote the graph on $n_t$ vertices with no edges. Then $K_{n_1,n_2,\ldots,n_t}=G_1'*G_2'$. Now applying the formula in \Cref{main theorem} again and using induction hypothesis we get $\beta_{i,i+d}(R_{K_{n_1,n_2,\ldots,n_t}}/I_r(K_{n_1,n_2,\ldots,n_t}))=0$ if $d\neq r$. Moreover,

{\footnotesize
\begin{align*}
    &\beta_{i,i+r}(R_{K_{n_1,n_2,\ldots,n_t}}/I_r(K_{n_1,n_2,\ldots,n_t}))\\
    &=\sum_{j=0}^{i+r-1}\left\{{n_t\choose j}\beta_{i-j,i-j+r}(R_{G_1'}/I_r(G_1'))+{\sum_{s=1}^{t-1}n_s\choose j}\beta_{i-j,i-j+r}(R_{G_2'}/I_r(G_2')) \right\}+\\
&\hspace{15em}   \sum_{j=1}^{i+r-1} \left[{i+r-1\choose r}-{i+r-1-j\choose r}-{j-1\choose r}\right]{n_t\choose j}{\sum_{s=1}^{t-1}n_s\choose i+r-j}\\ 
\end{align*}
}

Note that $\beta_{i-j,i-j+r}(R_{G_2'}/I_r(G_2'))=0$ for all $r\ge 1$. Hence 

{\footnotesize
\begin{align*}
&\beta_{i,i+r}(R_{K_{n_1,n_2,\ldots,n_t}}/I_r(K_{n_1,n_2,\ldots,n_t}))\\
&=\sum_{j=0}^{i+r-1}{n_t\choose j}\left[ {\sum_{s=1}^{t-1}n_s\choose i-j+r}{i-j+r-1\choose r}-\sum_{\underset{\sum_{s=1}^{t-1}j_s=i-j+r}{(j_1,j_2,\ldots,j_{t-1})\in\mathbb N^{t-1}}} \left\{\prod_{s=1}^{t-1}{n_s\choose j_s}\right\}\sum_{s=1}^{t-1}{j_s-1\choose r} \right]+\\
&\hspace{8em} {i+r-1\choose r}\sum_{j=1}^{i+r-1}{n_t\choose j}{\sum_{s=1}^{t-1}n_s\choose i-j+r}-\sum_{j=1}^{i+r-1}{i-j+r-1\choose r}{n_t\choose j}{\sum_{s=1}^{t-1}n_s\choose i-j+r}-\\
&\hspace{31em}\sum_{j=1}^{i+r-1}{j-1\choose r}{n_t\choose j}{\sum_{s=1}^{t-1}n_s\choose i-j+r}\\
&={i+r-1\choose r}\sum_{j=0}^{i+r}{n_t\choose j}{\sum_{s=1}^{t-1}n_s\choose i-j+r}-{n_t\choose i+r}{i+r-1\choose r}-\\
&\hspace{6.5em}\sum_{j=0}^{i+r-1}{n_t\choose j}\sum_{\underset{\sum_{s=1}^{t-1}j_s=i-j+r}{(j_1,j_2,\ldots,j_{t-1})\in\mathbb N^{t-1}}} \left\{\prod_{s=1}^{t-1}{n_s\choose j_s}\right\}\sum_{s=1}^{t-1}{j_s-1\choose r}-\sum_{j=1}^{i+r-1}{n_t\choose j}{j-1\choose r}{\sum_{s=1}^{t-1}n_s\choose i-j+r}
\end{align*}
}
Note that $\sum_{j=0}^{i+r}{n_t\choose j}{\sum_{s=1}^{t-1}n_s\choose i-j+r}={\sum_{s=1}^{t}n_s\choose i-j+r}$. Thus
{\footnotesize
\begin{align*}
&\beta_{i,i+r}(R_{K_{n_1,n_2,\ldots,n_t}}/I_r(K_{n_1,n_2,\ldots,n_t}))\\
&={i+r-1\choose r}{\sum_{s=1}^{t}n_s\choose i+r}-\sum_{j=0}^{i+r-1}{n_t\choose j}\sum_{\underset{\sum_{s=1}^{t-1}j_s=i-j+r}{(j_1,j_2,\ldots,j_{t-1})\in\mathbb N^{t-1}}} \left\{\prod_{s=1}^{t-1}{n_s\choose j_s}\right\}\sum_{s=1}^{t-1}{j_s-1\choose r}-\\
&\hspace{33em} \sum_{j=1}^{i+r}{n_t\choose j}{j-1\choose r}{\sum_{s=1}^{t-1}n_s\choose i-j+r}\\
\end{align*}
}
Now by induction on $s$ we have ${\sum_{s=1}^{t-1}n_s\choose i-j+r}=\sum_{\underset{\sum_{s=1}^{t-1}j_s=i-j+r}{(j_1,j_2,\ldots,j_{t-1})\in\mathbb N^{t-1}}}\prod_{s=1}^{t-1}{n_s\choose j_s}$. Hence
{\footnotesize
\begin{align*}
&\beta_{i,i+r}(R_{K_{n_1,n_2,\ldots,n_t}}/I_r(K_{n_1,n_2,\ldots,n_t}))\\
&={i+r-1\choose r}{\sum_{s=1}^{t}n_s\choose i+r}- \sum_{j=0}^{i+r}{n_t\choose j}\left[\sum_{\underset{\sum_{s=1}^{t-1}j_s=i-j+r}{(j_1,j_2,\ldots,j_{t-1})\in\mathbb N^{t-1}}} \left\{\prod_{s=1}^{t-1}{n_s\choose j_s}\right\}\left\{\sum_{s=1}^{t-1}{j_s-1\choose r}+{j-1\choose r}\right\}\right]\\
&={i+r-1\choose r}{\sum_{s=1}^{t}n_s\choose i+r}-\sum_{\underset{\sum_{s=1}^{t}j_s=i+r}{(j_1,j_2,\ldots,j_{t})\in\mathbb N^{t}}} \left\{\prod_{s=1}^{t}{n_s\choose j_s}\right\}\sum_{s=1}^{t}{j_s-1\choose r}.
\end{align*}
}
This completes the proof.
\end{proof}

Again, since $\beta_{i,i+d}(R_{K_{n_1,n_2,\ldots,n_t}}/I_r(K_{n_1,n_2,\ldots,n_t}))= 0$ for $d\neq r$, we have the following corollary.
\begin{corollary}\label{corollary 2}
    For all positive integer $r$, $\mathrm{reg}(R_{K_{n_1,n_2,\ldots,n_t}}/I_r(K_{n_1,n_2,\ldots,n_t}))=r$.
\end{corollary}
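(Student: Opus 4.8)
The plan is to deduce the statement directly from the Betti-number formula of \Cref{bipartite}, in exactly the way \Cref{corollary 1} was deduced from \Cref{complete graph}. Recall that $\mathrm{reg}(R/I)=\max\{j-i\mid \beta_{i,j}(R/I)\neq 0\}$; writing $j=i+d$, this is the largest $d$ for which some $\beta_{i,i+d}(R/I)$ is nonzero, i.e.\ the index of the topmost nonzero row of the Betti table. So the whole task reduces to locating the rows of the Betti table of $R_{K_{n_1,\ldots,n_t}}/I_r(K_{n_1,\ldots,n_t})$ that can carry a nonzero entry.

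For the upper bound $\mathrm{reg}\le r$, I would simply invoke \Cref{bipartite}, which asserts that $\beta_{i,i+d}(R_{K_{n_1,\ldots,n_t}}/I_r(K_{n_1,\ldots,n_t}))=0$ whenever $d\neq r$; hence no value $d>r$ contributes to the maximum, and $\mathrm{reg}\le r$. For the matching lower bound $\mathrm{reg}\ge r$ it suffices to produce a single nonzero Betti number on the row $d=r$. The quickest route is $i=1$: as long as $I_r(K_{n_1,\ldots,n_t})\neq 0$ — equivalently, the graph has at least two parts and at least $r+1$ vertices, so that it contains a connected induced subgraph on $r+1$ vertices — the ideal is minimally generated by squarefree monomials all of degree $r+1$, namely the $\mathbf{x}_A$ with $A$ a hyperedge of $\mathrm{Con}_r(K_{n_1,\ldots,n_t})$; therefore $\beta_{1,1+r}\neq 0$ and $\mathrm{reg}\ge (1+r)-1=r$. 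Alternatively one can read positivity of $\beta_{i,i+r}$ for small $i$ off the closed form in \Cref{bipartite}, but the generator argument requires no computation.

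Combining the two inequalities yields $\mathrm{reg}(R_{K_{n_1,\ldots,n_t}}/I_r(K_{n_1,\ldots,n_t}))=r$. I do not anticipate any genuine obstacle: all the substantive work is already contained in \Cref{bipartite}, and the argument is purely bookkeeping about which diagonal of the resolution is populated. The one point deserving a word of care is the degenerate case in which the complete multipartite graph is too small (or has a single part) to support a connected induced subgraph on $r+1$ vertices; then $I_r=0$, every entry of the formula in \Cref{bipartite} vanishes, and $\mathrm{reg}=0$, so the corollary is to be read with the tacit hypothesis $t\ge 2$ and $\sum_{s=1}^{t} n_s\ge r+1$.
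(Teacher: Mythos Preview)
Your argument is correct and matches the paper's approach exactly: the paper deduces the corollary in one line from \Cref{bipartite} by noting that $\beta_{i,i+d}=0$ for $d\neq r$, and you have simply made explicit the implicit lower bound (some generator in degree $r+1$ gives $\beta_{1,r+1}\neq 0$) and the tacit nondegeneracy hypothesis $t\ge 2$, $\sum_s n_s\ge r+1$.
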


\begin{remark}
    The formula for Betti numbers in \Cref{bipartite} can also be computed using the Hilbert series of the ideal $I_r(K_{n_1,n_2,\ldots,n_t})$ (see \cite[Theorem 4.9]{DRST}) and some combinatorial argument on the number of faces of the $r$-independence complex of the graph. Here we provide an alternative way to find the formula.
\end{remark}

\begin{example}
Let $G=K_{6,6}$ and $r=3$. Then the Betti table of $R_{K_{6,6}}/I_3(K_{6,6})$ computed using \Cref{bipartite} is given as follows.
\begin{figure}[h!]
\centering
\begin{tikzpicture}
[scale=.45]
    \draw [fill] (2,0) circle [radius=0.1];
\draw [fill] (2,1) circle [radius=0.1];
\draw [fill] (2,2) circle [radius=0.1];
\draw [fill] (2,3) circle [radius=0.1];
\draw [fill] (2,4) circle [radius=0.1];
\draw [fill] (2,5) circle [radius=0.1];
\draw [fill] (11,0) circle [radius=0.1];
\draw [fill] (11,1) circle [radius=0.1];
\draw [fill] (11,2) circle [radius=0.1];
\draw [fill] (11,3) circle [radius=0.1];
\draw [fill] (11,4) circle [radius=0.1];
\draw [fill] (11,5) circle [radius=0.1];
\draw (2,0)--(11,0)--(2,1)--(11,1)--(2,2)--(11,2)--(2,3)--(11,3)--(2,4)--(11,4)--(2,5)--(11,5)--(2,4)--(11,2)--(2,1)--(11,3)--(2,5)--(11,2)--(2,0)--(11,3)--(2,2)--(11,0)--(2,3)--(11,1)--(2,0)--(11,4)--(2,1)--(11,5)--(2,3)--(11,4)--(2,2)--(11,5)--(2,0);
\draw (11,0)--(2,4)--(11,1)--(2,5)--(11,0);
\node at (1.3,0) {$x_1$};
\node at (1.3,1) {$x_2$};
\node at (1.3,2) {$x_3$};
\node at (1.3,3) {$x_4$};
\node at (1.3,4) {$x_5$};
\node at (1.3,5) {$x_6$};
\node at (11.7,0) {$y_1$};
\node at (11.7,1) {$y_2$};
\node at (11.7,2) {$y_3$};
\node at (11.7,3) {$y_4$};
\node at (11.7,4) {$y_5$};
\node at (11.7,5) {$y_6$};

\draw [fill] (37.6,5) circle [radius=0.01];
\draw [fill] (13.8,5) circle [radius=0.01];
\draw [fill] (37.6,6.5) circle [radius=0.01];
\draw [fill] (13.8,6.5) circle [radius=0.01];
\draw (13.8,5)--(37.6,5);
\draw (13.8,6.5)--(37.6,6.5);
\node at (16,5.8){$0$};
\node at (17.4,5.8){$1$};
\node at (19.5,5.8){$2$};
\node at (21.9,5.8){$3$};
\node at (24.5,5.8){$4$};
\node at (27.5,5.8){$5$};
\node at (30.3,5.8){$6$};
\node at (32.9,5.8){$7$};
\node at (35.1,5.8){$8$};
\node at (37,5.8){$9$};
\node at (14.5,4.3){$0:$};
\node at (14.5,2.5){$1:$};
\node at (16,4.2){$1$};
\node at (17.4,4.2){$\cdot$};
\node at (19.5,4.2){$\cdot$};
\node at (21.9,4.2){$\cdot$};
\node at (24.5,4.2){$\cdot$};
\node at (27.5,4.2){$\cdot$};
\node at (30.3,4.2){$\cdot$};
\node at (32.9,4.2){$\cdot$};
\node at (35.1,4.2){$\cdot$};
\node at (37,4.2){$\cdot$};
\node at (16,2.4){$\cdot$};
\node at (17.4,2.4){$\cdot$};
\node at (19.5,2.4){$\cdot$};
\node at (21.9,2.4){$\cdot$};
\node at (24.5,2.4){$\cdot$};
\node at (27.5,2.4){$\cdot$};
\node at (30.3,2.4){$\cdot$};
\node at (32.9,2.4){$\cdot$};
\node at (35.1,2.4){$\cdot$};
\node at (37,2.4){$\cdot$};
\node at (14.5,0.7){$2:$};
\node at (16,0.6){$\cdot$};
\node at (17.4,0.6){$\cdot$};
\node at (19.5,0.6){$\cdot$};
\node at (21.9,0.6){$\cdot$};
\node at (24.5,0.6){$\cdot$};
\node at (27.5,0.6){$\cdot$};
\node at (30.3,0.6){$\cdot$};
\node at (32.9,0.6){$\cdot$};
\node at (35.1,0.6){$\cdot$};
\node at (37,0.6){$\cdot$};

\node at (14.5,-1.1){$3:$};
\node at (16,-1.1){$\cdot$};
\node at (17.4,-1.1){$465$};
\node at (19.5,-1.1){$2940$};
\node at (21.9,-1.1){$8482$};
\node at (24.5,-1.1){$14400$};
\node at (27.5,-1.1){$15615$};
\node at (30.3,-1.1){$11020$};
\node at (32.9,-1.1){$4926$};
\node at (35.1,-1.1){$1272$};
\node at (37,-1.1){$145$};

\end{tikzpicture}
\caption{ The graph $K_{6,6}$ and the Betti table of $R_{K_{6,6}}/I_3(K_{6,6})$}
\end{figure}
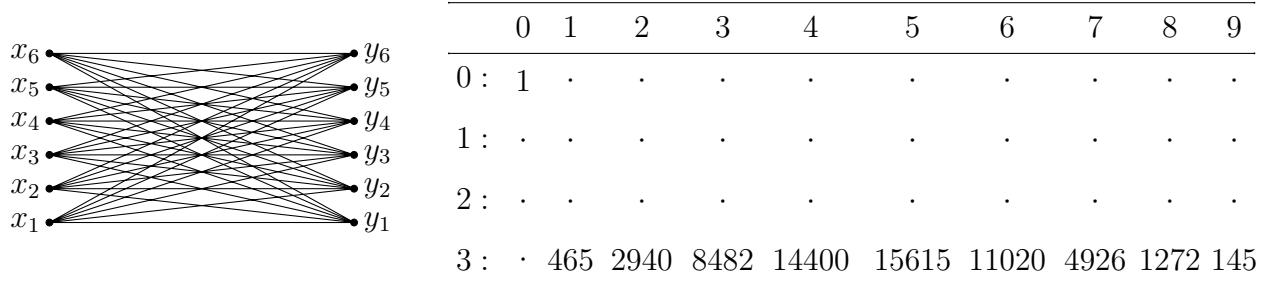
\end{example}

As an application of \Cref{bipartite} we get formulas for $\mathbb N$-graded Betti numbers of the Stanley-Reisner ideal of $r$-independence complexes of star graphs.  For $r=1$ case, this formula was also first derived in the thesis of Jacques \cite[Theorem 5.4.11]{SJ}.  

\begin{corollary}\label{star graph case}
    Let $K_{1,n}$ denote the star graph on $n+1$ vertices, where $V(K_{1,n})=\{x_0,x_1,\dots, x_n\}$ and $E(K_{1,n})=\{\{x_0,x_i\}\mid i\in [n]\}$. Then the $\mathbb N$-graded Betti numbers of $R_{K_{1,n}}/I_{r}(K_{1,n})$, for $r\ge 1$, can be expressed as follows.
    \[
    \beta_{i,i+d}(R_{K_{1,n}}/I_r(K_{1,n}))=\begin{cases}
        {i+r-2\choose r-1}{n\choose i+r-1} &\text{ for } d=r \text{ and }1\le i\le n-r+1,\\
        0 &\text{ otherwise. }
    \end{cases}
    \]
\end{corollary}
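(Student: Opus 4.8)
The plan is to derive this as a direct specialization of \Cref{bipartite}. The first observation is that the star graph $K_{1,n}$ is literally the complete bipartite graph with parts $\{x_0\}$ and $\{x_1,\dots,x_n\}$, since $E(K_{1,n})$ consists of all pairs having one endpoint in $\{x_0\}$ and the other in $\{x_1,\dots,x_n\}$. So I would invoke \Cref{bipartite} with $t=2$, $n_1=1$, $n_2=n$. This instantly gives $\beta_{i,i+d}(R_{K_{1,n}}/I_r(K_{1,n}))=0$ for all $d\neq r$, and the only remaining task is to simplify the expression for $d=r$.

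For $d=r$, I would start from the specialized formula
\[
\beta_{i,i+r}=\binom{n_1+n_2}{i+r}\binom{i+r-1}{r}-\sum_{\substack{(j_1,j_2)\in\mathbb N^2\\ j_1+j_2=i+r}}\binom{n_1}{j_1}\binom{n_2}{j_2}\left[\binom{j_1-1}{r}+\binom{j_2-1}{r}\right]
\]
and set $n_1=1$. The key point is that $\binom{1}{j_1}=0$ unless $j_1\in\{0,1\}$, so the sum collapses to exactly two summands: the term $j_1=0,\ j_2=i+r$ contributes $\binom{n}{i+r}\bigl[\binom{-1}{r}+\binom{i+r-1}{r}\bigr]=\binom{n}{i+r}\binom{i+r-1}{r}$ (using $\binom{-1}{r}=0$ for $r\ge 1$ under the convention that $\binom{a}{b}=0$ when $a<b$), and the term $j_1=1,\ j_2=i+r-1$ contributes $\binom{n}{i+r-1}\bigl[\binom{0}{r}+\binom{i+r-2}{r}\bigr]=\binom{n}{i+r-1}\binom{i+r-2}{r}$.

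Then I would finish by Pascal's identity applied twice. Writing $\binom{n+1}{i+r}=\binom{n}{i+r}+\binom{n}{i+r-1}$ cancels the $\binom{n}{i+r}\binom{i+r-1}{r}$ contribution and leaves $\binom{n}{i+r-1}\bigl[\binom{i+r-1}{r}-\binom{i+r-2}{r}\bigr]$; a second application, $\binom{i+r-1}{r}-\binom{i+r-2}{r}=\binom{i+r-2}{r-1}$, yields the claimed value $\binom{i+r-2}{r-1}\binom{n}{i+r-1}$. Finally I would read off the range of nonvanishing directly: $\binom{i+r-2}{r-1}\neq 0$ forces $i\ge 1$, while $\binom{n}{i+r-1}\neq 0$ forces $i\le n-r+1$, which matches the stated interval.

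There is no genuine obstacle here; the argument is bookkeeping once \Cref{bipartite} is available. The only point demanding a little care is the consistent use of the convention $\binom{a}{b}=0$ for $a<b$ (in particular for $\binom{-1}{r}$ and $\binom{0}{r}$ when $r\ge 1$), exactly as implicit in the statement of \Cref{bipartite}, together with a quick check that the resulting formula does vanish outside $1\le i\le n-r+1$.
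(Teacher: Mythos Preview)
Your proposal is correct and follows exactly the approach the paper intends: the corollary is stated without proof immediately after \Cref{bipartite} as a direct specialization, and you have simply (and correctly) filled in the routine simplification with $t=2$, $n_1=1$, $n_2=n$.
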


Let $G$ be a finite simple graph on the vertex set $\{x_1,\ldots,x_n\}$ and let $t$ be an integer such that $t>1$. A path of length $t$ from a vertex $u$ in $G$ to another vertex $v$ is a sequence of vertices $u=x_{i_1},x_{i_2},\ldots,x_{i_t}=v$ such that $\{x_{i_j},x_{i_{j+1}}\}\in E(G)$ for each $j\in [t-1]$. The $t$-path ideal of $G$, denoted by $J_t(G)$, is a monomial ideal generated by the monomials $\left\{\prod_{j=1}^tx_{i_j}\mid \{x_{i_1},x_{i_2},\ldots,x_{i_t}\} \text{ is a path in }G \right\}$.

Note that if $C_n$ is a cycle of length $n$ then $J_{r+1}(C_n)=I_r(C_n)$. The graded Betti numbers of the path ideal of $C_n$ are calculated by Alilooee and Faridi in \cite{AlFa}. Thus using \cite[Theorem 4.13]{AlFa} we can express the graded Betti numbers of $I_r(C_n)$ as follows.

\begin{theorem}\label{cycle graph}
    Let $n,r,p$ and $q$ be integers such that $1\le r\le n-1$ and $n=(r+2)p+q$, where $p\ge 0$ and $0\le q\le r+1$. Then
    \[
    \beta_{i,i+d}(R_{C_{n}}/I_r(C_{n}))=\begin{cases}
        r+1 &\text{ if }i=2p, d=n-2p, q=0,\\
        1&\text{ if }i=2p+1, d=n-2p-1, q\neq 0,\\
        \frac{n}{n-\frac{d(r+1)}{r}}{\frac{d}{r}\choose i-\frac{d}{r}}{n-\frac{d(r+1)}{r}\choose \frac{d}{r}}&\text{ if }i<n-d,0\le \frac{d}{r}\le i,i\le \frac{2d}{r}\le 2p,\\
        0&\text{ otherwise.}
    \end{cases}
    \]
\end{theorem}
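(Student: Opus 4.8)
The plan is to obtain the formula as a direct translation of the known computation of the graded Betti numbers of path ideals of cycles. First I would record the ideal identity $J_{r+1}(C_n)=I_r(C_n)$ noted just above the statement: for a subset $A\subseteq V(C_n)$ with $|A|=r+1$ and $1\le r\le n-1$, the induced subgraph $C_n[A]$ is connected precisely when $A$ is a set of $r+1$ cyclically consecutive vertices, and such sets are exactly the vertex sets of paths on $r+1$ vertices in $C_n$ (in the degenerate case $r+1=n$ the unique such $A$ is all of $V(C_n)$, which is connected and is also the vertex set of a Hamiltonian path). Hence $I_r(C_n)$ and $J_{r+1}(C_n)$ have the same minimal monomial generating set, so $R_{C_n}/I_r(C_n)$ and $R_{C_n}/J_{r+1}(C_n)$ have identical minimal graded free resolutions and therefore identical graded Betti numbers.

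The second step is to substitute $t=r+1$ into the formula of Alilooee and Faridi \cite[Theorem 4.13]{AlFa} for $\beta_{i,j}\bigl(R_{C_n}/J_t(C_n)\bigr)$. Under this substitution the division with remainder $n=(t+1)p+q$ with $0\le q\le t$ becomes $n=(r+2)p+q$ with $0\le q\le r+1$, which is precisely the hypothesis of the statement; moreover the quantity $t-1$ that governs the denominator and the binomial arguments of the generic (non-extremal) term becomes $r$, which is why that term is written using $d/r$ and $d(r+1)/r$ and contributes only when $r\mid d$. Re-indexing the internal degree from $j$ to $i+d$ and sorting the cases of \cite[Theorem 4.13]{AlFa}, namely the two extremal contributions in internal degree $n$ (one of value $r+1$ occurring when $q=0$, one of value $1$ occurring when $q\ne 0$) together with the generic contribution on the range $i<n-d$, $0\le d/r\le i\le 2d/r\le 2p$, one reads off exactly the four-case formula, with all other $\beta_{i,i+d}$ vanishing.

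Since the ideal identity is already available, essentially all of the remaining work is bookkeeping: one must reconcile the indexing conventions of \cite{AlFa} (the degree shift, whether Betti numbers are recorded for $J_t(C_n)$ or for $R_{C_n}/J_t(C_n)$, and the labelling of the homological index) with the conventions fixed in \Cref{Section 2}, and check that every side condition in \cite[Theorem 4.13]{AlFa} transports verbatim to the one written here. This notational matching is the only delicate point; there is no homological input beyond \cite{AlFa}, and the Mayer--Vietoris arguments of \Cref{Section 3} play no role. As consistency checks I would verify that the case $i=1$, $d=r$ yields $\beta_{1,r+1}=n$ (the number of arcs of $r+1$ consecutive vertices in $C_n$, hence the number of minimal generators of $I_r(C_n)$) and that the extremal cases are compatible with the value of $\mathrm{reg}(R_{C_n}/I_r(C_n))$ obtained afterwards.
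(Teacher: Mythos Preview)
Your proposal is correct and follows exactly the approach the paper takes: the paper simply records the identity $J_{r+1}(C_n)=I_r(C_n)$ and then invokes \cite[Theorem 4.13]{AlFa}, presenting the theorem without further argument. Your write-up is in fact more detailed than the paper's, since you spell out why the generating sets coincide and flag the notational reconciliation with \cite{AlFa}; there is nothing to add.
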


Now we proceed to determine the graded Betti numbers of $I_r(W_{n+1})$, where $W_{n+1}$ is the wheel graph on $n+1$ vertices. Let $H$ be the graph consisting of only one vertex. Then $W_{n+1}=C_n*H$. Therefore, using \Cref{main theorem}, \Cref{cycle graph} and the Pascal's identity we get the following.

\begin{theorem}\label{wheel graph}
    The $\mathbb N$-graded Betti numbers of the wheel graph $W_{n+1}$ can be expressed as follows.
    \begin{align*}
    &\beta_{i,i+d}(R_{W_{n+1}}/I_r(W_{n+1}))\\
    &=\begin{cases}
        \beta_{i,i+d}(R_{C_n}/I_r(C_{n}))+\beta_{i-1,i+d-1}(R_{C_n}/I_r(C_{n}))+{i+d-2\choose d-1}{n\choose i+d-1} &\text{ for }d=r,\\
        \beta_{i,i+d}(R_{C_n}/I_r(C_{n}))+\beta_{i-1,i+d-1}(R_{C_n}/I_r(C_{n})) &\text{ for }d\ge r+1.
    \end{cases}
    \end{align*}
\end{theorem}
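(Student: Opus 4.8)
The plan is to specialize \Cref{main theorem} to the decomposition $W_{n+1}=C_n*H$, where $H$ is the one-vertex graph, applying the theorem with $G_1=C_n$ and $G_2=H$, so that in the notation of \Cref{main theorem} we have $m=1$. Before substituting, I would record two elementary facts that collapse the general formula. First, since $H$ has a single vertex and $r\ge 1$, the hypergraph $\mathrm{Con}_r(H)$ has no hyperedges; hence $I_r(H)=0$ and $R_H/I_r(H)\cong\mathbb K[y_1]$ is a polynomial ring, so $\beta_{a,a+d}(R_H/I_r(H))=0$ for every $d\ge 1$, in particular for all $d\ge r$ appearing here. Second, $\binom{1}{j}=0$ for all $j\ge 2$, so in both branches of \Cref{main theorem} only $j=0$ and $j=1$ contribute to the sums weighted by $\binom{m}{j}=\binom{1}{j}$.

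With these observations the case $d\ge r+1$ is immediate: the formula of \Cref{main theorem} reduces to the $j=0$ and $j=1$ terms of $\binom{1}{j}\beta_{i-j,i-j+d}(R_{C_n}/I_r(C_n))$, while the $\binom{n}{j}\beta_{i-j,i-j+d}(R_H/I_r(H))$ part vanishes identically, giving $\beta_{i,i+d}(R_{C_n}/I_r(C_n))+\beta_{i-1,i-1+d}(R_{C_n}/I_r(C_n))$. For $d=r$ the same reduction handles the first sum, and the second (double) sum contributes only its $j=1$ term, namely
\[
\left[\binom{i+r-1}{r}-\binom{i+r-2}{r}-\binom{0}{r}\right]\binom{n}{i+r-1}.
\]
Since $r\ge 1$ we have $\binom{0}{r}=0$, and Pascal's identity gives $\binom{i+r-1}{r}-\binom{i+r-2}{r}=\binom{i+r-2}{r-1}$, so this term equals $\binom{i+r-2}{r-1}\binom{n}{i+r-1}$, which is exactly the extra summand in the stated formula for $d=r$ (using $i-1+d=i+d-1$).

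I do not expect a genuine obstacle here: the argument is a bookkeeping specialization of \Cref{main theorem}. The only points requiring a little care are confirming that $I_r(H)=0$ forces all the in-range Betti numbers of $R_H/I_r(H)$ to vanish, correctly isolating the single surviving term $j=1$ in the $\binom{1}{j}$-weighted double sum, and keeping the index shift $i-1+d=i+d-1$ consistent with the statement. If one wishes, substituting the explicit piecewise expression for $\beta_{i,i+d}(R_{C_n}/I_r(C_n))$ from \Cref{cycle graph} then yields a fully closed form, though that is not needed for the identity as stated.
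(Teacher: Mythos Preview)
Your proposal is correct and follows precisely the paper's own approach: the paper simply notes that $W_{n+1}=C_n*H$ with $H$ a single vertex and then invokes \Cref{main theorem} together with Pascal's identity, which is exactly the specialization you carry out in detail. Your observation that \Cref{cycle graph} is only needed if one wants to expand the $C_n$ Betti numbers explicitly is also accurate; the identity as stated does not require it.
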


Using \Cref{main corollary} we get the following.

\begin{corollary}\label{corollary 3}
    Let $n,r,p$ and $q$ be integers such that $1\le r\le n-1$ and $n=(r+2)p+q$, where $p\ge 0$ and $0\le q\le r+1$. Then
    \[
    \mathrm{reg}(R_{W_{n+1}}/I_r(W_{n+1}))=\mathrm{reg}(R_{C_n}/I_r(C_n))=\begin{cases}
        rp+q-1&\text{  if  }q\neq 0,\\
        rp&\text{  if  }q=0.
    \end{cases}
    \]
\end{corollary}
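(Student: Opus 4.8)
The plan is to obtain the corollary from \Cref{main corollary} together with the explicit graded Betti numbers of cycles recorded in \Cref{cycle graph}. The first step is to write the wheel as a join: with $H$ the graph consisting of the single hub vertex, we have $W_{n+1}=C_n*H$. Since $r\ge 1$, the hypergraph $\mathrm{Con}_r(H)$ has no hyperedges, so $I_r(H)=0$ and $R_H/I_r(H)$ is a polynomial ring in one variable, hence of regularity $0$. Applying \Cref{main corollary} with $G_1=C_n$ and $G_2=H$ gives
\[
\mathrm{reg}(R_{W_{n+1}}/I_r(W_{n+1}))=\max\{\mathrm{reg}(R_{C_n}/I_r(C_n)),\,0\}=\mathrm{reg}(R_{C_n}/I_r(C_n)),
\]
where the last equality holds because $I_r(C_n)$ is a proper homogeneous ideal, so $\mathrm{reg}(R_{C_n}/I_r(C_n))\ge 0$. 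This is exactly the first equality of the corollary, and it reduces everything to evaluating $\mathrm{reg}(R_{C_n}/I_r(C_n))$.

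For the second equality I would compute $\mathrm{reg}(R_{C_n}/I_r(C_n))$, which equals the largest $d$ for which $\beta_{i,i+d}(R_{C_n}/I_r(C_n))\ne 0$ for some $i$, by reading off \Cref{cycle graph}. In the third case listed there, the constraint $i\le \frac{2d}{r}\le 2p$ forces $\frac{d}{r}\le p$, so every contribution from that case satisfies $d\le rp$. When $q=0$, the first case of \Cref{cycle graph} produces a nonzero Betti number in homological degree $i=2p$ with $d=n-2p=(r+2)p-2p=rp$, so the maximum is exactly $rp$. When $q\ne 0$, the second case produces a nonzero Betti number with $d=n-2p-1=rp+q-1\ge rp$, and since all other nonzero Betti numbers have $d\le rp\le rp+q-1$, the maximum is $rp+q-1$. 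Combined with the first equality, this yields the stated case distinction.

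I expect the only delicate point to be the bookkeeping around degenerate parameter ranges: one should check the boundary situations $p=0$ (which forces $q=r+1$ and $n=r+1$, where $I_r(C_n)$ is principal of degree $r+1$ and the formula reads $\mathrm{reg}=r=rp+q-1$) and $q=1$ (where both the extremal case and the third case of \Cref{cycle graph} attain $d=rp$), and one should verify cleanly that the inequalities cutting out the third case of \Cref{cycle graph} really do cap $d$ at $rp$. No ingredient beyond \Cref{main corollary} and \Cref{cycle graph} is needed; alternatively one could read the regularity straight off the formula in \Cref{wheel graph}, but the join argument is the shorter route.
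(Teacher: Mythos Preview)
Your argument is correct and follows the same route as the paper: the paper's entire proof is the single line ``Using \Cref{main corollary} we get the following,'' i.e., it invokes the join formula with $W_{n+1}=C_n*H$ exactly as you do and leaves the value of $\mathrm{reg}(R_{C_n}/I_r(C_n))$ implicit in \Cref{cycle graph}. You have simply supplied the details the paper omits, namely the check that the third case of \Cref{cycle graph} caps $d$ at $rp$ while the first two cases realize $d=rp$ (for $q=0$) and $d=rp+q-1$ (for $q\ne 0$).
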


\section*{Acknowledgements} The authors would like to thank Priyavrat Deshpande, Anurag Singh, Adam Van Tuyl, and the anonymous referee for some helpful comments and suggestions. Amit Roy acknowledges the financial support from the Department of Atomic Energy, Government of India through a postdoctoral research fellowship during his stay in NISER Bhubaneswar. He is also partially supported by a grant from the Infosys Foundation.

\subsection*{Data availability statement} Data sharing is not applicable to this article as no new data were created or
analyzed in this study.

\section{Statements and Declarations}
\subsection*{Funding} The authors declare that no funds, grants, or other support were received during the preparation of this manuscript.

\subsection*{Competing Interests} The authors have no relevant financial or non-financial interests to disclose.

\subsection*{Author Contributions} All authors contributed equally.

\end{document}